\documentclass[a4paper,10pt]{amsart}
\usepackage[english]{babel}
\usepackage[utf8]{inputenc}
\usepackage[T1]{fontenc}
\usepackage{csquotes}
\usepackage[style=numeric,
	useprefix,%
	giveninits=true,%
	hyperref,%
	doi=false,%
	url=false,%
	isbn=false,%
	backend=bibtex,%
	maxbibnames=99%
	]{biblatex}
\bibliography{./BIB}

\usepackage{amssymb}
\usepackage{mathrsfs}
\usepackage{hyperref}
\usepackage[usenames,dvipsnames]{xcolor}
\hypersetup{colorlinks,%
citecolor=Black,%
filecolor=Black,%
linkcolor=Black,%
urlcolor=Black}
\usepackage{enumitem}	
\usepackage{mathtools}	
\mathtoolsset{showonlyrefs=true}
\allowdisplaybreaks 
%


\newcommand{\scr}[1]{\mathscr{#1}}
\newcommand{\frk}[1]{\mathfrak{#1}}

%
\newcommand{\R}{\mathbb{R}}	
\newcommand{\Id}{\mathrm{Id}}	
\newcommand{\dd}{\,\mathrm{d}}	
\newcommand{\de}{\partial}		
\renewcommand{\div}{\operatorname{div}}	
\newcommand{\THEN}{\Rightarrow}	


\newcommand{\ad}{\mathrm{ad}}

\newcommand{\Aut}{\mathtt{Aut}}

\newcommand{\Ad}{\operatorname{Ad}}

\newcommand{\grad}{\nabla}
\newcommand{\vol}{\mathrm{vol}}
\newcommand{\laplacian}{\triangle}
\newcommand{\trace}{\mathrm{trace}}

\newcommand{\Lie}{\mathrm{Lie}}
\newcommand{\heis}{\mathfrak{h}}
\newcommand{\Heis}{\mathbb{H}}


\usepackage{dsfont}


   \def\XXint#1#2#3{{\setbox0=\hbox{$#1{#2#3}{\int}$}
        \vcenter{\hbox{$#2#3$}}\kern-.5\wd0}}

\theoremstyle{plain}
\newtheorem{proposition}{Proposition}[section]
\newtheorem{theorem}[proposition]{Theorem}
\newtheorem{lemma}[proposition]{Lemma}
\newtheorem{corollary}[proposition]{Corollary}
\newtheorem{thm}{Theorem}[section]

\theoremstyle{definition}

\newtheorem{remark}[proposition]{Remark}

\theoremstyle{remark}



\title[Equivalence of sub-Laplacian on Polarized groups]{Equivalence of sub-Laplacians\\ on Polarized groups}
\author[Kijowski]{Antoni Kijowski}
\address[Kijowski]{Montec LLC, Rydygiera 8/6, 01-793, Warsaw, Poland}
\email{antekijowski@gmail.com}
\author[Nicolussi Golo]{Sebastiano Nicolussi Golo}
\address[Nicolussi Golo]{University of Fribourg, Chemin du Musée 23, 1700 Fribourg, Switzerland}
\email{sebastiano2.72@gmail.com}
\author[Warhurst]{Ben Warhurst}
\address[Warhurst]{Institute of Mathematics, University of Warsaw, ul. Banacha 2, 02-097 Warsaw, Poland}
\email{b.warhurst@mimuw.edu.pl}

\date{\today. \IfFileExists{./.gittex}{\input{./.gittex}}{}}

\subjclass[2020]{
35B06, 
53C17, 
35H20, 
53C30, 
22F30, 
22E25.} 

\keywords{sub-Laplacian, symmetries of PDEs, sub-Riemannian Lie group, Carnot group, Heisenberg group.}

\begin{document}
\maketitle

\begin{abstract}
	We characterize smooth maps between sub-Riemannian Lie groups that commute with sub-Laplacians.
	We show they are sub-Riemannian conformal submersions.
	Our work clarifies the analysis initiated on Carnot groups in \cite{MR2363343}.
	In particular, we show that the sub-Laplacian in a Carnot group determines the sub-Riemannian structure.
\end{abstract}

\setcounter{tocdepth}{2}
\phantomsection
\addcontentsline{toc}{section}{Contents}
\tableofcontents

\section{Introduction}

Laplace operators, defined as $\laplacian u :=\div(\grad u)$, depend on the choice of a metric and a measure.
The measure determines the divergence $\div$ of vector fields.
The metric, Riemannian or sub-Riemannian, determines the gradient $\grad u$ (a vector field) in terms of the differential $\dd u$ (a differential form).

On a Lie group, we assume both the metric and the measure to be left invariant. 
These conditions restrict the class of Laplace operators we obtain.
For example, on the abelian Lie group $\R^n$, there exists just one Laplacian up to a change of coordinates.
In other words, if $\laplacian_1$ and $\laplacian_2$ are two Laplace operators determined by different choices of scalar product and Haar measure on $\R^n$, then there exists a linear isomorphism $F:\R^n\to\R^n$ such that, for every $u\in C^2(\R^n)$, $\laplacian_2(u\circ F) = (\laplacian_1u)\circ F$.

We are firstly interested in similar results for Carnot groups or, more generally, sub-Riemannian Lie groups.
We will describe sub-Riemannian Lie groups as tuples $(G,V,\langle \cdot,\cdot \rangle,\vol_G)$,
where $G$ is a Lie group with Lie algebra $\frk g$,
$V=V(G)\subset \frk g$ is a vector space that is bracket generating in $\frk g$,
$\langle \cdot,\cdot \rangle$ is a scalar product on $V$,
and $\vol_G$ is a left-invarian Haar measure on $G$.
See Section~\ref{sssec:num2.2} for details.
Typical examples of sub-Riemannian Lie groups are Riemannian Lie groups (when $V=\frk g$) and Carnot groups (when $V$ is the first layer of a stratification).

These data determine a (sub-)Laplace operator $\laplacian_G u = \div_G(\grad_G u)$, 
where $\div_G$ is the divergence defined by $\vol_G$ and $\grad_Gu$ is the horizontal gradient of $u$ determined by $(V,\langle \cdot,\cdot \rangle)$.
The operator $\laplacian_G$ is sub-elliptic, but not elliptic as soon as $V\neq\frk g$.

The study of maps that commute with a Laplace operator is not new.
Helgason proved in \cite[p.~387]{MR1834454} that a diffeomorphism $F:G\to H$ between Riemannian manifolds is an isometry if and only if it is a Laplacian commuting map, i.e., $\laplacian_G(u\circ F) = (\laplacian_H u)\circ F$ for all $u$ of class $C^2$ on $H$.
Watson extended this result in~\cite{MR365419}: a smooth map $F:G\to H$ is a Laplacian commuting map if and only if it is a harmonic Riemannian submersion, that is, an orthogonal submersion that is critical for a Dirichlet energy.
Goldberg and Ishihara gave a further generalization in \cite{MR520606}.
Fuglede~\cite{MR499588} and Ishihara~\cite{MR545705} independently showed that a smooth map $F:G\to H$ between Riemannian manifolds is a Laplacian commuting map if and only if is is a harmonic morphism, that is, it pulls back harmonic functions to harmonic functions.
See also~\cite{MR1068448,MR1630785,MR2414232,MR3492899}.

We have a slightly different approach.
If $F:G\to H$ is a smooth map between sub-Riemannian Lie groups, then the operator $u\mapsto \laplacian_G(u\circ F)$ is a differential operator $C^2(H)\to C^0(G)$ of order two.
Thus, we can write $\laplacian_G(u\circ F) = P_2u + P_1u + P_0u$, where each $P_j:C^2(H)\to C^0(G)$ is a homogeneous differential operator of order $j$.
The following question arises: 
{\it Is $P_2$ of the form $P_2u = \lambda^2\cdot (\laplacian_Hu)\circ F$, for some $\lambda:G\to(0,+\infty)$?}
We show that this happens exactly when $F$ is a conformal submersion, in which case the lower order terms $P_1$ and $P_0$ are determined.
A smooth map $F:G\to H$ between sub-Riemannian Lie groups is a \emph{conformal submersion} of factor $\lambda$ if, for every $p$, the adjoint map between the dual spaces
\[
(DF(p)|_{V(G)})^* : V(H)^* \to V(G)^*
\]
is a homothetic embedding of factor $\lambda(p)$.
See~Section~\ref{sec670d0abe}.

\begin{thm}\label{thm6763089e}
	Let $G$ and $H$ be sub-Riemannian Lie groups, with $\Omega_G\subset G$ and $\Omega_H\subset H$ open. 
	Suppose that $F:\Omega_G\to \Omega_H$ is a $C^2$-smooth map,
	and that $\lambda:\Omega\to(0,+\infty)$,
	$b:\Omega\to V(H)$,
	and $c:\Omega\to\R$ are continuous functions.

	The following statements are equivalent:
	\begin{enumerate}[label=(\roman*)]
	\item\label{thm6763089e_1}
	for all $u\in C^2(\Omega_H)$,
	\[
	\laplacian_G(u\circ F) = \lambda^2 \cdot (\laplacian_Hu)\circ F 
	+ \langle b , (\grad_Hu)\circ F \rangle_H + c \cdot (u\circ F).
	\]
	\item\label{thm6763089e_2}
	$F$ is a conformal submersion of factor $\lambda$, $c\equiv0$ and 
	\[ 
	b(p) =  \trace_G(D^2F(p)) + DF(p)[\grad_G\mu_G(p)] - \lambda(p)^2\grad_H\mu_H(F(p)) .
	\]
	\end{enumerate}
	In the case $\dim(G)=\dim(H)$, then both conditions are equivalent to $F$ being a conformal $C^2$ diffeomorphism.
	In particular, if $\lambda$ is constant then $F$ is a homothety;
	if $\lambda\equiv 1$, then $F$ is an isometry.
\end{thm}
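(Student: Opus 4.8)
The plan is to compute $\laplacian_G(u\circ F)$ by the chain rule in adapted left-invariant frames and to organize the result as a sum $P_2u+P_1u+P_0u$ of operators homogeneous of orders $2,1,0$ in $u$; the equivalence is then read off by comparing orders. Fix orthonormal frames $X_1,\dots,X_m$ of $V(G)$ and $Y_1,\dots,Y_k$ of $V(H)$, so that $\laplacian_G v=\sum_iX_i^2v+(\text{first order in }v)$ and likewise for $\laplacian_H$, the first-order corrections being governed by the potentials $\mu_G,\mu_H$. Writing $\xi_i(p):=DF(p)[X_i(p)]\in T_{F(p)}H$, differentiating $X_i(u\circ F)=\dd u_{F(p)}[\xi_i]$ once more splits into a genuine second-order term in $u$ and a term where the derivative falls on $\xi_i$. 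Carrying this out gives
\[
P_2u(p)=\sum_i (D^2u)_{F(p)}(\xi_i,\xi_i),\qquad P_0\equiv 0,
\]
where $(D^2u)_q$ is the Hessian bilinear form of $u$ in the $H$-frame, while $P_1$ collects $\sum_iX_i(DF[X_i])=\trace_G(D^2F)$ together with the two measure corrections. The implication \ref{thm6763089e_2}$\THEN$\ref{thm6763089e_1} is then a direct substitution, so the substance is the converse.

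For \ref{thm6763089e_1}$\THEN$\ref{thm6763089e_2} I would argue order by order, exploiting that the $2$-jet of $u$ at a point $q=F(p)$ is arbitrary (e.g.\ pull back coordinate polynomials). Testing with a constant $u$ makes the left-hand side vanish while the right-hand side reduces to $c\cdot(u\circ F)$, forcing $c\equiv0$. Comparing the order-$2$ parts gives, for every symmetric bilinear form $(D^2u)_q$,
\[
\sum_i (D^2u)_q(\xi_i,\xi_i)=\lambda(p)^2\sum_j (D^2u)_q(Y_j,Y_j),
\]
which is equivalent to the identity of contravariant symmetric tensors $\sum_i\xi_i\odot\xi_i=\lambda(p)^2\sum_jY_j\odot Y_j$ in $\Sym^2(T_qH)$. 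The right-hand side is $\lambda^2$ times the metric tensor of $(V(H),\langle\cdot,\cdot\rangle_H)$, while the left-hand side is positive semidefinite with image $\Span\{\xi_i\}$; hence $\Span\{\xi_i\}=V(H)$, i.e.\ $DF(p)[V(G)]=V(H)$, and $DF(p)|_{V(G)}\,(DF(p)|_{V(G)})^*=\lambda(p)^2\Id_{V(H)}$. This last equation says precisely that $(DF(p)|_{V(G)})^*$ is a homothetic embedding of factor $\lambda(p)$, i.e.\ that $F$ is a conformal submersion of factor $\lambda$.

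With the conformal condition secured, I would extract $b$ from the order-$1$ comparison. Since $\dd u_q$ may be prescribed arbitrarily, matching the first-order parts of the two sides yields $b$ as the sum of the three contributions already isolated: the horizontal trace $\trace_G(D^2F(p))$ coming from differentiating the vector fields $\xi_i$, the term $DF(p)[\grad_G\mu_G(p)]$ coming from the measure correction of $\laplacian_G$, and $-\lambda(p)^2\grad_H\mu_H(F(p))$ carried through by $\lambda^2(\laplacian_Hu)\circ F$. This reproduces the stated formula for $b$. \emph{The main obstacle} here is purely organizational but delicate: one must express $D^2F$, the frame Hessians, and the potentials $\mu_G,\mu_H$ invariantly and keep track of every first-order term (including the bracket asymmetries $[X_i,\cdot]$ and $[Y_j,\cdot]$) so that the bookkeeping closes exactly, with no residual first-order operator left over.

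Finally, for the equidimensional refinement I would show that a conformal submersion with $\dim G=\dim H$ is a conformal diffeomorphism. From $DF(p)[V(G)]=V(H)$ and the rank count one gets $\dim V(G)\ge\dim V(H)$; the equality $\dim V(G)=\dim V(H)$ (so that $DF(p)|_{V(G)}$ is a conformal isomorphism) together with invertibility of the full $DF(p)$ is the genuinely extra input, which I expect to be the hard part of this last clause: it does not follow from the order-$2$ identity alone, since $\laplacian_H$ does not see the non-horizontal directions, and must instead be deduced from the full relation \ref{thm6763089e_1} — for instance by showing that $F$ is an immersion (hence, by equal dimension, a local diffeomorphism) and then globalizing. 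Granting $DF(p)$ invertible for all $p$, $F$ is a local diffeomorphism intertwining the sub-Riemannian structures up to the factor $\lambda$; a connectedness argument promotes this to a diffeomorphism onto $\Omega_H$. If $\lambda$ is constant the horizontal metric is rescaled by a fixed factor at every point, so $F$ is a homothety, and $\lambda\equiv1$ gives an isometry.
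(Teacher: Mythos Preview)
Your approach is essentially the paper's: the implication \ref{thm6763089e_2}$\Rightarrow$\ref{thm6763089e_1} is the direct chain-rule computation (the paper's Theorem~\ref{thm67082cc6}), and for \ref{thm6763089e_1}$\Rightarrow$\ref{thm6763089e_2} the paper also compares orders, making the order-$2$ comparison concrete by plugging in the test functions $u_\alpha^{\hat q}(q)=\phi(\hat q^{-1}q)\,\langle\alpha\mid\log(\hat q^{-1}q)\rangle^2$ to obtain $\sum_i\langle\alpha\mid DF(\hat p)X_i\rangle^2=\lambda(\hat p)^2\sum_j\langle\alpha\mid Y_j\rangle^2$ for all $\alpha\in\frk h^*$, which is exactly your tensor identity $\sum_i\xi_i\odot\xi_i=\lambda^2\sum_jY_j\odot Y_j$ evaluated on rank-one forms. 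Regarding your flagged worry in the equidimensional clause: the paper does not argue further than you do---it simply invokes (Section~\ref{sec670d0abe}) that once $DF(p)[V(G)]=V(H)$ and $V(H)$ is bracket generating, $DF(p)$ is surjective onto $\frk h$ (lift each $Y_j$ to an $F$-related horizontal field on $G$ and use that brackets of $F$-related fields are $F$-related), hence a local diffeomorphism when $\dim G=\dim H$; so your concern is legitimate but the missing step is short.
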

\begin{proof}
	Theorem~\ref{thm67082cc6} in section~\ref{sec67082cbe} proves the implication $\ref{thm6763089e_2}\THEN\ref{thm6763089e_1}$
	
	For the implication $\ref{thm6763089e_1}\THEN\ref{thm6763089e_2}$, 
	Theorem~\ref{thm670e8117} in Section~\ref{sec670d7088} proves that a function $F$ satisfying even the apparently weaker conditions~\eqref{eq670e8147}, or~\eqref{eq670d7216}, is necessarily a conformal submersion of factor $\lambda$.
	The form of the vector $b$ is then given again by Theorem~\ref{thm67082cc6}.

	Finally, we observe that conformal submersions between groups with the same (topological) dimension are diffeomorphisms and fall into the definition of conformal maps.
\end{proof}

When $\lambda\equiv1$, Theorem~\ref{thm6763089e} gives a characterization of submetries of class $C^2$ in terms of the sub-Laplacian.
When $\lambda\equiv1$ and $\dim(G)=\dim(H)$, Theorem~\ref{thm6763089e} gives a characterization of isometries in terms of the sub-Laplacian.

In the particular case of Carnot groups we have the following corollary.
A \emph{Carnot group} is a connected simply connected sub-Riemannian Lie group whose polarization $V(G)\subset\frk g$ is the first layer of a stratification of $\frk g$.
See~\cite{MR3742567,donne2024metricliegroupscarnotcaratheodory} for details.
The important feature of Carnot groups is that they support dilations, that is, homotheties for all positive factors $\lambda>0$.

\begin{thm}\label{thm670e8a84}
	Let $G$ and $H$ be sub-Riemannian Carnot groups.
	If there exist a smooth map $F:G\to H$ of class $C^2$ and $\lambda>0$
	such that, for all $u\in C^2(H)$,
	\begin{equation}\label{eq676b2756}
	\laplacian_G(u\circ F) = \lambda^2 (\laplacian_Hu)\circ F
	\end{equation}
	then $H$ is a quotient of $G$, as Carnot groups.
	
	If $\dim(G)=\dim(H)$, then $F$ itself is the composition of a dilation, a left translation, and an (isometric) isomorphism of Carnot groups. 
	In particular, $G$ and $H$ are isomorphic as Carnot groups.
\end{thm}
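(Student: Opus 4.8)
The plan is to translate the hypothesis through Theorem~\ref{thm6763089e} and then read off the algebra from Pansu's differential, exploiting that Carnot groups carry dilations. On a Carnot group the Haar measure is Lebesgue measure in exponential coordinates and, by nilpotency, $\trace(\ad_X)=0$ for every $X\in\frk g$; hence $\laplacian_G$ is the drift-free sum-of-squares operator and the modular terms vanish, $\grad_G\mu_G\equiv0$ and $\grad_H\mu_H\equiv0$. Comparing \eqref{eq676b2756} with Theorem~\ref{thm6763089e}\ref{thm6763089e_1}, the hypothesis is precisely condition~\ref{thm6763089e_1} with $b\equiv0$, $c\equiv0$ and $\lambda$ constant. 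The implication $\ref{thm6763089e_1}\THEN\ref{thm6763089e_2}$ then shows that $F$ is a conformal submersion of the constant factor $\lambda$ (and, since $b=0$ while the $\mu$-terms vanish, that $\trace_G(D^2F)\equiv0$, though this harmonicity is not needed below). What I use is that, being a conformal submersion, the horizontal differential $DF(p)|_{V(G)}\colon V(G)\to V(H)$ is surjective at every $p$; in particular $\dim V(H)\le\dim V(G)$.

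Composing $F$ with the left translation $L_{F(e_G)^{-1}}$ of $H$, which is an isometry and so preserves \eqref{eq676b2756}, I may assume $F(e_G)=e_H$. Since $F$ is of class $C^1$, it is Pansu differentiable at $e_G$, and its Pansu differential
\[
\pi:=PF_{e_G},\qquad \pi(g)=\lim_{t\to0^+}\delta^H_{1/t}\bigl(F(\delta^G_t g)\bigr),
\]
is a homogeneous homomorphism of Carnot groups (Pansu; see also Warhurst). Its Lie-algebra differential $\dd\pi\colon\frk g\to\frk h$ is graded, and its degree-one part is $DF(e_G)|_{V(G)}$, which is onto $V(H)$ by the previous paragraph. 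As the first layer generates, $\dd\pi$ is surjective, hence so is $\pi$. Therefore $\ker\pi=\exp(\ker\dd\pi)$ is a homogeneous normal subgroup and $H\cong G/\ker\pi$ as Carnot groups, which is the first assertion.

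Assume now $\dim(G)=\dim(H)$. Then $\dd\pi$ is a surjective morphism between Lie algebras of equal dimension, hence an isomorphism, so $\ker\dd\pi=0$ and $\pi$ is an isomorphism of Carnot groups; in particular $G$ and $H$ are isomorphic. For $F$ itself, the equidimensional part of Theorem~\ref{thm6763089e} gives that $F$ is a conformal diffeomorphism, and since $\lambda$ is constant it is a homothety of factor $\lambda$. By the conformal rigidity of Carnot groups---every homothety is a dilation composed with a left translation and an isometric isomorphism (Capogna--Cowling; Le~Donne--Ottazzi)---the map $\Phi:=\delta^H_{1/\lambda}\circ L_{F(e_G)^{-1}}\circ F$ is an identity-fixing isometry $G\to H$, hence an isometric isomorphism, and $F=L_{F(e_G)}\circ\delta^H_\lambda\circ\Phi$, as claimed.

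I expect the main obstacle to be the analytic-to-algebraic passage of the middle step: certifying that the blow-up of $F$ at $e_G$ is a genuine surjective Carnot morphism. This rests on two inputs---that a $C^1$ map between Carnot groups is Pansu differentiable with homomorphism differential, and that the submersion property of $F$ promotes to surjectivity of $\dd\pi$ on the first layer and thence on all of $\frk h$. The only other external ingredient is the homothety rigidity invoked in the equidimensional case.
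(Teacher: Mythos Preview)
Your argument is correct and follows essentially the same route as the paper: invoke Theorem~\ref{thm6763089e} to upgrade the sub-Laplacian identity to a conformal submersion of constant factor, pass to the Pansu differential to obtain a surjective Carnot morphism and hence the quotient statement, and in the equidimensional case use the isometry rigidity of \cite{MR3441517,MR3646026} to factor $F$ as dilation, translation, and isometric automorphism. One small tightening: Pansu differentiability of $F$ follows not from $C^1$ alone but from $F$ being a $C^1$ \emph{contact} map, which you have already secured via the conformal-submersion conclusion; it is worth making that dependence explicit.
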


Theorem~\ref{thm670e8a84} is a particular case of Propositions~\ref{prop6767ac89} and~\ref{prop67682013}: see section~\ref{sec67633de5} for their proofs.

\begin{remark}
	The map $F$ in Theorem~\ref{thm670e8a84} is in fact $C^\infty$ smooth.
	Indeed, if $u\in C^\infty(H)$ and if $F$ is $C^2$, then~\eqref{eq676b2756} implies that $u\circ F$ is $C^\infty$ smooth, by a standard bootstrap argument that uses the hypoellipticity of the sub-Laplacian.
	Since $u$ is arbitrary, we can take a system of coordinates in $H$ and conclude that $F$ is $C^\infty$ smooth.
	Such an argument cannot be used under the hypothesis of Theorem~\ref{thm6763089e}.
\end{remark}

Theorem~\ref{thm670e8a84} answers to a question from \cite{MR2363343}.
In \cite[Chapter 16]{MR2363343}, it is asked whether two differential operators defined as sum of squares of vector fields on a Carnot group are equivalent.
More precisely, if $G$ is a simply connected Lie group with stratified Lie algebra $\frk g = \bigoplus_{j=1}^sV_j$, given two basis $X = \{X_1,\dots,X_r\}$ and $Y=\{Y_1,\dots,Y_r\}$ of $V_1$, 
define $\laplacian_X = \sum_{j=1}^r \tilde X_j^2$ and $\laplacian_Y = \sum_{j=1}^r \tilde Y_j^2$.
We use the tilde $\tilde v$ to denote the left-invariant vector field with $\tilde v(1_G)=v \in\frk g$.
Thus, we wonder whether there exists a diffeomorphism $F_0:G\to G$ such that $\laplacian_X(u\circ F_0) = (\laplacian_Yu)\circ F_0$ for all $u\in C^2(G)$.

In \cite[Chapter 16]{MR2363343}, it is shown that if $G$ is a free Lie group, then such a diffeomorphism $F_0$ always exists, and it is a Lie group automorphism.
It is also shown that in Heisenberg groups it may not exist.

Theorem~\ref{thm670e8a84} implies that such an diffeomorphism $F_0$ exists if and only if the two sub-Riemannian Carnot groups given by the scalar products on $V_1$ that make  $X$ and $Y$ orthonormal, respectively, are isometric.
In particular, such an isomorphism $F_0$ is itself an isometry and a Lie group automorphism.
However, $F_0$ does not need to map the basis $X$ to $Y$.
See also Corollary~\ref{cor676308cb}.

In this way, we recover the results shown in \cite[Chapter 16]{MR2363343} for free groups.
Indeed, if $G$ is a free Lie group, then every choice of two basis $X$ and $Y$ for $V_1$ determines a Lie group automorphism $F_0:G\to G$ that such that $(F_0)_* X_j= Y_j$ for all $j$.
Moreover, we give a complete description of all equivalence classes of sub-Laplacians in Heisenberg groups,
see Section~\ref{subs676e6b89}.

Finally, we notice that, since every Carnot group is a Carnot quotient of a free Carnot group, then every sub-Laplacian on a Carnot group is induced by the sub-Laplacian of a free group via a conformal submersion of factor 1, i.e., a submetry, which is the quotient map itself.

\subsection*{Acknowledgments}
Kijowski and Nicolussi Golo started this project at the {\it Okinawa Institute of Science and Technology (OIST)} in Okinawa, Japan. They are grateful for the support of OIST and, in particular, of the TVSP program in OIST, for providing a great opportunity and the perfect environment to work together.

Nicolussi Golo was partially supported 
by the Swiss National Science Foundation (grant 200021-204501 ‘{\it Regularity of sub-Riemannian geodesics and applications}’), 
by the European Research Council (ERC Starting Grant 713998 GeoMeG `{\it Geometry of Metric Groups}'), 
by the Academy of Finland 
(grant 288501 `{\it Geometry of subRiemannian groups}', 
grant 322898 `{\it Sub-Riemannian Geometry via Metric-geometry and Lie- group Theory}', 
grant 328846 `{\it Singular integrals, harmonic functions, and boundary regularity in Heisenberg groups}' 
grant 314172 `{\it Quantitative rectifiability in Euclidean and non-Euclidean spaces}'). 

\section{Preliminaries}

\subsection{General notations}
If $M$ is a smooth manifold, we denote by $TM$ its tangent bundle
and by $\Gamma^k(TM)$ the sections of $TM$ of class $C^k$.
For $k=\infty$, we just write $\Gamma(TM):=\Gamma^\infty(TM)$.
If $F:M\to N$ is a $C^1$ map between smooth manifolds, we denote by $dF:TM\to TN$ its standard differential-geometric differential.

\subsection{Lie groups}
Given a Lie group $G$, we identify, as vector spaces, its Lie algebra $\frk g=\Lie(G)$ with the tangent space $T_1G$ of $G$ at the identity $1_G$ of $G$.
We denote by $L_p$ the left translation by $p$.
If $v:E\to\frk g$ for some $E\subset G$, then $\tilde v:E\to TG$ is the map 
\[
\tilde v(p) := dL_p[v] \in T_pG .
\]
If $f$ is a vector valued $C^1$-function defined in a neighborhood of $p\in G$, then
\[
\tilde vf(p) := df(p)[\tilde v(p)] = \left.\frac{\dd}{\dd t}\right|_{t=0} f(p\exp(tv)) .
\]
If $v\in\frk g$, then $\tilde v$ is the left-invariant vector field with value $v$ at $1$.

\subsection{Polarized and Sub-Riemannian Lie groups}\label{sssec:num2.2}
A \emph{polarized Lie group} is a pair $(G,V)$ where $G$ is a connected Lie group and $V\subset\frk g$ is a bracket-generating subspace of the Lie algebra $\frk g$ of $G$. We call $V$ a \emph{polarization} of $G$ and denote it by $V(G)$. 
By \emph{bracket-generating subspace} we mean a linear subspace that Lie generates the Lie algebra, i.e., $\frk g$ is the smallest Lie algebra containing $V$. 

Carnot groups are the stratified nilpotent examples. 
An example where $\frk g$ is not nilpotent is $SL_2(\R)$. 
In this case the Lie algebra $\frk{sl}_2(\R)$ has the basis 
\[
	X = \begin{pmatrix} 0 & 1 \\ 0 & 0 \end{pmatrix} 
	\quad Y=\begin{pmatrix} 0 & 0 \\ 1 & 0 \end{pmatrix} 
	\quad Z=\begin{pmatrix} 1 & 0 \\ 0 & -1 \end{pmatrix} 
\] 
where 
\[
	[X,Y]=Z \quad [Z,X]=2X \quad [Z,Y]=-2Y. 
\]
A two dimensional polarization is given by $V(G)={\rm span} \{X,Y\}$. 

In general, the bundle of left translates 
\[
\tilde V(G) := \bigcup_{p \in G} d L_p(V)
\]
forms a \emph{horizontal subbundle} of $TG$, which is bracket generating. 

A \emph{sub-Riemannian Lie group} is a pair $(G,\langle \cdot,\cdot \rangle_G)$ where $G$ is a polarized Lie group and $\langle \cdot,\cdot \rangle_G$ is a scalar product on the polarization $V(G)$. 

\subsection{Lie differential of order one}
Let $G$ and $H$ be Lie groups and $\Omega\subset G$ open.
The \emph{Lie differential} of a smooth function $F:\Omega\to H$ at $p\in \Omega$ is the  map $ DF : \Omega \to \frk h \otimes \frk{g}^* $ defined by 
\begin{equation}\label{DFdef1}
	DF(p)[v] := \left.\frac{\dd}{\dd t}\right|_{t=0} F(p)^{-1}F(p\exp(tv))  .
\end{equation}
We will use square brackets $[\cdot]$ to highlight the (multi-)linear entry of a function.
In some cases we will drop the brackets and just juxtapose the vector as in $DF(p)v$.

\begin{proposition}\label{PropChainRule1} 
	The basic properties of the Lie differential $D$ are as follows:
	\begin{enumerate}[label=(\ref{PropChainRule1}.\alph*),leftmargin=*] 
	\item\label{PropChainRule1_1}
	Let $G$ and $H$ be Lie groups, $\Omega\subset G$ open,
	and $F:\Omega\to H$ a $C^1$-function.
	Then the Lie differential $DF$ is a continuous map $\Omega\to \frk h \otimes \frk g^*$ and decomposes as
	\begin{equation*}
	DF(p)[v] = dL_{F(p)}^{-1}[ dF(p)[dL_p[v]] ] , 
	\qquad \forall p\in\Omega,\ v\in\frk g,
	\end{equation*}
	where $d$ denotes the standard differential-geometric differential and $L_p$ denotes the left translation by $p$.
	\item\label{PropChainRule1_2}
	Let $G,H,K$ be Lie groups, $\Omega_G\subset G$ and $\Omega_H\subset H$ open sets.
	Let $A:\Omega_G\to H$ and $B:\Omega_H\to K$ be $C^1$-functions.
	Then, if $p\in\Omega_G$ is such that $A(p)\in\Omega_H$,
	and if $v\in\frk g$, then
	\begin{equation*}
	D(B\circ A)(p)[v] = DB(A(p)) [ DA(p)[v] ] .
	\end{equation*}
	\item\label{PropChainRule1_3}
	Let $G$ and $H$ be Lie groups. 
	For every $a,p\in G$ and every Lie group morphism $A:G\to H$,
	\begin{equation*}
	D(A\circ L_a)(p) = A_* .
	\end{equation*}
	\end{enumerate}
\end{proposition}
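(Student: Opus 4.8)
The plan is to prove the three items in order, because part~\ref{PropChainRule1_1} supplies a factorization of the Lie differential $D$ through the ordinary differential $d$, after which parts~\ref{PropChainRule1_2} and~\ref{PropChainRule1_3} become essentially formal. The unifying idea is that $DF$ is nothing but $dF$ read in the left-invariant trivializations of $TG$ and $TH$. For part~\ref{PropChainRule1_1} I would unwind the definition~\eqref{DFdef1} by differentiating the curve $t\mapsto F(p)^{-1}F(p\exp(tv))$, viewed as $L_{F(p)^{-1}}\circ F\circ L_p\circ\exp(t\,\cdot)$. Applying the ordinary chain rule at $t=0$, and using $\tfrac{\dd}{\dd t}\big|_{t=0}\exp(tv)=v$ together with $\tfrac{\dd}{\dd t}\big|_{t=0}L_p(\exp(tv))=dL_p[v]$, gives
\[
DF(p)[v] = dL_{F(p)^{-1}}\big[dF(p)\big[dL_p[v]\big]\big].
\]
The only identity to record is $dL_{F(p)^{-1}}=dL_{F(p)}^{-1}$, valid since $L_{F(p)^{-1}}=L_{F(p)}^{-1}$ as diffeomorphisms; this is the claimed formula, and it shows in particular that $DF(p)[v]\in T_{1_H}H=\frk h$ depends linearly on $v$. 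Continuity of $p\mapsto DF(p)$ into $\frk h\otimes\frk g^*$ then follows because each factor $p\mapsto dL_p$, $p\mapsto dF(p)$, and $p\mapsto dL_{F(p)}^{-1}$ is continuous, the last as a continuous function of $F(p)$ with $F$ of class $C^1$.

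For part~\ref{PropChainRule1_2} I would feed the ordinary chain rule $d(B\circ A)(p)=dB(A(p))\circ dA(p)$ into the factorization just obtained. Writing the formula of~\ref{PropChainRule1_1} for the composite $B\circ A$ and inserting the relation $dA(p)[dL_p[v]]=dL_{A(p)}\big[DA(p)[v]\big]$, which is the formula of~\ref{PropChainRule1_1} for $A$ solved for $dA(p)[dL_p[v]]$, leaves precisely the factorization of $DB$ at the point $A(p)$ evaluated on $DA(p)[v]$; here one uses the hypothesis $A(p)\in\Omega_H$ so that $DB(A(p))$ is defined. This yields $D(B\circ A)(p)[v]=DB(A(p))\big[DA(p)[v]\big]$.

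For part~\ref{PropChainRule1_3} the cleanest route is to return to the definition rather than to~\ref{PropChainRule1_2}. Since $A$ is a group morphism, $A(ap\exp(tv))=A(ap)\,A(\exp(tv))$, so the left translate $A(ap)^{-1}A(ap\exp(tv))$ collapses to $A(\exp(tv))$, independently of $a$ and $p$. Invoking the naturality of the exponential under morphisms, $A\circ\exp_G=\exp_H\circ A_*$, this equals $\exp_H(tA_*v)$, whose derivative at $t=0$ is $A_*v$; hence $D(A\circ L_a)(p)=A_*$.

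I do not expect a serious obstacle here: the substance lies entirely in correctly identifying the three tangent-space factors in~\ref{PropChainRule1_1} and in the elementary identity $dL_{F(p)^{-1}}=dL_{F(p)}^{-1}$, after which~\ref{PropChainRule1_2} and~\ref{PropChainRule1_3} follow by bookkeeping. The one nonroutine ingredient to cite explicitly is the standard compatibility $A\circ\exp_G=\exp_H\circ A_*$ used in part~\ref{PropChainRule1_3}; the main point requiring care throughout is tracking which linear map acts on which tangent space.
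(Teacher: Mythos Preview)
The paper states this proposition without proof, treating the three items as elementary consequences of the definition~\eqref{DFdef1}. Your argument is correct and is exactly the standard verification one would expect: unwinding~\eqref{DFdef1} via the ordinary chain rule to get~\ref{PropChainRule1_1}, then using that factorization together with $d(B\circ A)=dB\circ dA$ for~\ref{PropChainRule1_2}, and the homomorphism property plus $A\circ\exp_G=\exp_H\circ A_*$ for~\ref{PropChainRule1_3}.
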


\subsection{Lie differential of order two}
Let $G$ and $H$ be two polarized Lie groups, $\Omega\subset G$ open
and $F:\Omega\to H$ a $C^2$ function.
Notice that the function $DF:\Omega\to\frk h\otimes\frk g^*$ is a $C^1$ function valued in the Abelian Lie group $\frk h\otimes\frk g^*$.
As such, we define the \emph{second order Lie differential} of $F$ as the Lie differential $D(DF)$ of $DF$,
that is, as the function $D^2F: \Omega \to (\frk{h} \otimes \frk g^*) \otimes \frk g^*$,
\begin{equation}\label{eq67684517}
\begin{aligned}
	D^2F(p)[v,w] 
	&:= (D(DF)(p)[w])[v] 
	= \left.\frac{\dd}{\dd t}\right|_{t=0} ( DF(p \exp(t w))[v] - DF(p)[v] )\\
	&= \left.\frac{\dd}{\dd t}\right|_{t=0} DF(p \exp(t w))[v] \\ 
	&= \left.\frac{\dd}{\dd t}\right|_{t=0} \left.\frac{\dd}{\dd s}\right|_{s=0} F(p\exp(tw))^{-1}F(p\exp(tw)\exp(sv)) . 
\end{aligned}
\end{equation}

Notice that $D^2F(p)[v,w]$ need not be symmetric in $[v,w]$.
If $u:G\to\R$ is $C^2$ and $v\in V(G)$, then~\eqref{eq67684517} gives
\begin{equation}\label{eq67683e74}
\begin{aligned}
	D^2u(p)[v,v]
	&= \left.\frac{\dd}{\dd t}\right|_{t=0} \left.\frac{\dd}{\dd s}\right|_{s=0} ( u(p\exp(tv)\exp(sv)) - u(p\exp(tv))) \\
	&= \left.\frac{\dd}{\dd t}\right|_{t=0} \left.\frac{\dd}{\dd s}\right|_{s=0} u(p\exp((t+s)v)) \\
	&= \left.\frac{\dd}{\dd t}\right|_{t=0} \tilde vu(p\exp(tv)) \\
	&= \tilde v^2u(p) .
\end{aligned}
\end{equation}

\begin{lemma}
	Let $G,H,K$ be Lie groups, $\Omega_G\subset G$ and $\Omega_H\subset H$ open sets.
	Let $A:\Omega_G\to\Omega_H$ and $B:\Omega_H\to K$ be $C^1$-functions. 
	Then, for all $p\in\Omega_G$ and $v,w\in\frk g$,
	\begin{equation}\label{eq670d7a23}
	\begin{split}
		&D^2(B\circ A)(p)[v,w] \\
		&\qquad= D^2B(A(p))[DA(p)[v],DA(p)[w]] + DB(A(p))[D^2A(p)[v,w]] .
	\end{split}
	\end{equation}
\end{lemma}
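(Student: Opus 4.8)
The plan is to reduce everything to the first-order chain rule~\ref{PropChainRule1_2} together with the defining formula~\eqref{eq67684517} for the second-order Lie differential, avoiding any direct manipulation of the iterated limit in~\eqref{eq67684517}. The starting observation is that all the differentials appearing here take values in fixed vector spaces ($\frk h\otimes\frk g^*$, $\frk k\otimes\frk h^*$, and so on), which we regard as abelian Lie groups. Hence, for any $C^1$ map $\Phi$ into such a space, the Lie differential in a direction $w$ reduces to the ordinary directional derivative $D\Phi(p)[w]=\frac{\dd}{\dd t}\big|_{t=0}\Phi(p\exp(tw))$, since left translation in an abelian group is a translation. This lets me treat $D^2$ as an honest second derivative and use the Leibniz rule freely.

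First I would fix $v,w\in\frk g$ and apply the first-order chain rule~\ref{PropChainRule1_2} to write, for every $q$ near $p$,
\[
D(B\circ A)(q)[v]=DB(A(q))[DA(q)[v]].
\]
By the definition~\eqref{eq67684517}, $D^2(B\circ A)(p)[v,w]$ is the directional derivative of the left-hand side along $\tilde w$, that is $\frac{\dd}{\dd t}\big|_{t=0}$ of the right-hand side evaluated at $q=p\exp(tw)$. The right-hand side is the evaluation of the $\Lin(\frk h,\frk k)$-valued function $q\mapsto DB(A(q))$ on the $\frk h$-valued function $q\mapsto DA(q)[v]$, so this evaluation is bilinear and the Leibniz rule splits the $t$-derivative into exactly two terms.

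The second (easier) term is $DB(A(p))\big[\frac{\dd}{\dd t}\big|_{t=0}DA(p\exp(tw))[v]\big]$, and by the definition~\eqref{eq67684517} applied to $A$ this is exactly $DB(A(p))[D^2A(p)[v,w]]$. For the first term I would set $\xi:=DA(p)[v]\in\frk h$ and consider the \emph{auxiliary} map $\Psi:\Omega_H\to\frk k$, $\Psi(q):=DB(q)[\xi]$; by~\eqref{eq67684517} its own Lie differential is $D\Psi(q)[\eta]=D^2B(q)[\xi,\eta]$. The first term equals $\frac{\dd}{\dd t}\big|_{t=0}\Psi(A(p\exp(tw)))=D(\Psi\circ A)(p)[w]$, and one more application of the first-order chain rule~\ref{PropChainRule1_2} gives $D\Psi(A(p))[DA(p)[w]]=D^2B(A(p))[DA(p)[v],DA(p)[w]]$. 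Adding the two terms yields~\eqref{eq670d7a23}.

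The only genuinely delicate point is the Leibniz step: I must ensure that the evaluation map $(L,\eta)\mapsto L[\eta]$ is bilinear and continuous and that both $q\mapsto DB(A(q))$ and $q\mapsto DA(q)[v]$ are $C^1$, so that the product rule applies; this is where the $C^2$ hypotheses on $A$ and $B$ and the continuity statement of~\ref{PropChainRule1_1} are used. Everything else is bookkeeping, namely identifying each of the two resulting terms with one of the two summands in~\eqref{eq670d7a23} via the definition of $D^2$ and a second use of the chain rule. I do not expect any issue with the noncommutativity in $[v,w]$, since I never symmetrize and the definition of $D^2$ is order-sensitive in precisely the way the formula records.
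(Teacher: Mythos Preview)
Your proposal is correct and follows essentially the same approach as the paper: apply~\eqref{eq67684517} to reduce $D^2(B\circ A)(p)[v,w]$ to a $t$-derivative, use the first-order chain rule~\ref{PropChainRule1_2} to rewrite $D(B\circ A)(q)[v]$, split via the Leibniz rule, and identify the two resulting terms. The paper carries this out as a single displayed computation without explicitly naming the auxiliary map $\Psi$, but the logic is identical; your extra care in isolating $\Psi$ and noting the regularity needed (indeed $A$ and $B$ should be $C^2$, despite the $C^1$ in the statement) only makes the argument clearer.
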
 
\begin{proof}
	If $A$ and $B$ are maps satisfying the assumptions, then setting $F=B \circ A$ in \eqref{eq67684517} and applying Proposition~\ref{PropChainRule1_2} leads to the following straightforward computation
	\begin{align*}
		D^2(B\circ A)(p)[v,w]
		&\overset{\eqref{eq67684517}}= \left.\frac{\dd}{\dd t}\right|_{t=0} D(B\circ A)(p \exp(t w))[v] \\
		&\overset{\ref{PropChainRule1_2}}= \left.\frac{\dd}{\dd t}\right|_{t=0} DB(A(p\exp(tw))[DA(p\exp(tw))[v]] \\
		&= \left.\frac{\dd}{\dd t}\right|_{t=0} DB(A(p\exp(tw))[DA(p)[v]] \\
		&\qquad\qquad\qquad\qquad	+ \left.\frac{\dd}{\dd t}\right|_{t=0} DB(A(p))[DA(p\exp(tw))[v]] \\
		&\overset{\ref{PropChainRule1_2}\&\eqref{eq67684517}}= D^2B(A(p))[DA(p)[v],DA(p)[w]] \\
		&\qquad\qquad\qquad\qquad	+ DB(A(p))[D^2A(p)[v,w]] .
	\end{align*}
\end{proof}

\subsection{Contact maps}
Let $G$ and $H$ be polarized Lie groups and $\Omega\subset G$ open.
A $C^1$-map $F:\Omega\to H$ is a \emph{contact map} if 
\[
DF(p)[V(G)] \subseteq V(H),
\qquad\forall p\in\Omega.
\]

Moreover by Proposition~\ref{PropChainRule1_1}, $F$ is a contact map if and only if $dF(p)[\tilde V(G)_p] \subseteq \tilde V(H)_{F(p)}$ for all $p\in\Omega$.

Notice that, if $F:\Omega\to H$ is a contact map of class $C^2$, then its second order Lie differential 
$D^2F(p)$ at $p\in\Omega$ preserves the polarizations, in the sense that
$D^2F(p)[V(G),\frk g] \subseteq V(H)$.
This follows immediately from \eqref{eq67684517}.

If $G$ and $H$ are sub-Riemannian Lie groups, and if $F:\Omega\to H$ is a contact map of class $C^2$, then the \emph{trace} of the bilinear map $D^2F(p)|_{V(G)}:V(G)\times V(G)\to V(H)$ is 
\[
\trace_G(D^2F(p)) := \sum_{i=1}^r  D^2F(p)[X_i,X_i] ,
\]
for one (thus every) orthonormal basis $X_1,\dots,X_r$ of $V(G)$.
The fact that the definition of $\trace_G(D^2F(p))$ does not depend on the choice of the orthonormal basis of $V(G)$ is a standard computation.
Indeed, if $Y_1,\dots,Y_r$ is another orthonormal basis of $V(G)$, 
then there exists a matrix $a\in\mathrm{GL}(r)$ with $a^T = a^{-1}$ and $Y_i = a_i^j X_j$ (sum over $j$); hence
\begin{align*}
	\sum_{i=1}^r D^2F(p)[Y_i,Y_i]
	&= \sum_{i=1}^r D^2F(p)[a_i^j X_j , a_i^kX_k] 
	= \sum_{i=1}^r a_i^j a_i^k D^2F(p)[ X_j , X_k] \\
	&\overset{a_i^k = (a^T)^i_k}= \sum_{i=1}^r a_i^j (a^T)^i_k D^2F(p)[ X_j , X_k] \\
	&\overset{a^T = a^{-1}}= \sum_{i=1}^r a_i^j (a^{-1})^i_k D^2F(p)[ X_j , X_k] \\
	&= \delta_k^j D^2F(p)[ X_j , X_k] 
	= \sum_{\ell=1}^r D^2F(p)[ X_\ell , X_\ell] .
\end{align*}

\subsection{Homothetic projections}
Let $V,W$ be Hilbert spaces.
The \emph{transpose} of a linear map $L:V\to W$ is the linear map $L^T:W\to V$ such that
\[
\langle L^Tw,v \rangle_V = \langle w,Lv \rangle_W
\]
for all $v\in V$ and $w\in W$.

A linear map $L:V\to W$ is a \emph{homothetic projection} of factor $\lambda>0$ if
\begin{equation}\label{eq6708401d}
\langle L^Tw_1,L^Tw_2 \rangle_V  = \lambda^2 \langle w_1,w_2 \rangle_W ,
\end{equation}
for all $w_1,w_2\in W$,
i.e., the transpose $L^T:W\to V$ is a \emph{homothetic embedding} of factor $\lambda$.
It follows that $L$ is surjective.
Moreover, we have the equivalent formulations in Proposition~\ref{prop676ff612}.

\begin{lemma}\label{lem6771bbb2}
	Let $V$ and $W$ be Hilbert spaces (not necessarily finite-dimensional).
	If $L:V\to W$ is a homothetic projection of factor $\lambda$, then:
	\begin{enumerate}[label=(\ref{lem6771bbb2}.\alph*),leftmargin=*]
	\item\label{lem6771bbb2_1}
	$L(V)=W$;
	\item\label{lem6771bbb2_2}
	$L^T(W)=\ker(L)^\perp$;
	\item\label{lem6771bbb2_3}
	$LL^Tw = \lambda^2 w $, for every $w\in W$;
	\item\label{lem6771bbb2_4}
	$L^TLv = \lambda^2v$, for every $v\in\ker(L)^\perp$;
	\item\label{lem6771bbb2_5}
	the map $\ker(L)^\perp\to W$, $v\mapsto Lv/\lambda$, is an isometry.
	\end{enumerate}
\end{lemma}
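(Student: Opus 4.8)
The plan is to take the definition of homothetic projection at face value, namely that $L^T$ is a homothetic embedding of factor $\lambda$, meaning $\langle L^Tw_1,L^Tw_2\rangle_V=\lambda^2\langle w_1,w_2\rangle_W$ for all $w_1,w_2\in W$. From this one identity I can extract all five conclusions by elementary Hilbert-space manipulations, working in an order that lets later items reuse earlier ones. The natural starting point is \ref{lem6771bbb2_3}, since $LL^T$ is the composition that the defining identity most directly controls.

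\textbf{Establishing \ref{lem6771bbb2_3} and \ref{lem6771bbb2_1}.} First I would prove $LL^Tw=\lambda^2w$ for every $w\in W$. For any $w_1\in W$, the defining identity rewritten via the transpose relation gives $\langle LL^Tw_1,w_2\rangle_W=\langle L^Tw_1,L^Tw_2\rangle_V=\lambda^2\langle w_1,w_2\rangle_W$ for all $w_2\in W$. Since $w_2$ ranges over all of $W$, the two sides agree as functionals, so $LL^Tw_1=\lambda^2w_1$, which is \ref{lem6771bbb2_3}. Surjectivity \ref{lem6771bbb2_1} is then immediate: given $w\in W$, the vector $v=\lambda^{-2}L^Tw$ satisfies $Lv=\lambda^{-2}LL^Tw=w$, so $L(V)=W$. (This also confirms the claim made in the text just before Lemma~\ref{lem6771bbb2} that $L$ is surjective.)

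\textbf{Establishing \ref{lem6771bbb2_2} and \ref{lem6771bbb2_4}.} For \ref{lem6771bbb2_2} I would use the standard orthogonal decomposition. The inclusion $L^T(W)\subseteq\ker(L)^\perp$ holds because for $w\in W$ and $v\in\ker(L)$ we have $\langle L^Tw,v\rangle_V=\langle w,Lv\rangle_W=0$. For the reverse inclusion, take $v\in\ker(L)^\perp$ and consider $v'=\lambda^{-2}L^TLv$, which lies in $L^T(W)$; computing $L(v-v')$ using \ref{lem6771bbb2_3} applied to $Lv$ shows $L(v-v')=Lv-\lambda^{-2}LL^TLv=Lv-Lv=0$, so $v-v'\in\ker(L)$, while $v-v'$ also lies in $\ker(L)^\perp$ because both $v$ and $v'=\lambda^{-2}L^TLv\in L^T(W)\subseteq\ker(L)^\perp$ do; hence $v-v'=0$ and $v=v'\in L^T(W)$. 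This argument simultaneously delivers \ref{lem6771bbb2_4}, since for $v\in\ker(L)^\perp$ we have just shown $v=\lambda^{-2}L^TLv$, i.e. $L^TLv=\lambda^2v$.

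\textbf{Establishing \ref{lem6771bbb2_5}.} Finally, for $v_1,v_2\in\ker(L)^\perp$ I would compute $\langle Lv_1/\lambda,Lv_2/\lambda\rangle_W=\lambda^{-2}\langle L^TLv_1,v_2\rangle_V=\lambda^{-2}\langle\lambda^2v_1,v_2\rangle_V=\langle v_1,v_2\rangle_V$ using \ref{lem6771bbb2_4}, so $v\mapsto Lv/\lambda$ preserves inner products on $\ker(L)^\perp$; it is onto $W$ by \ref{lem6771bbb2_1} (every $w$ is $Lv$ for some $v$, whose $\ker(L)^\perp$-component maps to the same image), giving the isometry. The only point requiring genuine care, rather than symbol-pushing, is the infinite-dimensional subtlety flagged in the statement: I must justify the orthogonal decomposition $V=\ker(L)\oplus\ker(L)^\perp$ and the identification of $L^T(W)$ with the full orthogonal complement without appealing to finite-dimensionality. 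The main obstacle is therefore ensuring $\ker(L)$ is \emph{closed} so that the projection theorem applies; this follows from continuity of $L$, and continuity of $L$ is itself a consequence of \ref{lem6771bbb2_3} together with boundedness of $L^T$ (which is clear from the defining identity, as it is even an isometry up to scale). I would make this closedness observation explicit at the outset so that every subsequent use of orthogonal complements is legitimate in the general Hilbert-space setting.
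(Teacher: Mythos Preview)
Your proof is correct and uses essentially the same elementary Hilbert-space manipulations as the paper, just reordered: you establish \ref{lem6771bbb2_3} first and derive \ref{lem6771bbb2_1} from it, whereas the paper proves \ref{lem6771bbb2_1} directly by showing $L(V)^\perp=\{0\}$. The one noticeable variation is your reverse inclusion in \ref{lem6771bbb2_2}: you construct $v'=\lambda^{-2}L^TLv$ and show $v=v'$, obtaining \ref{lem6771bbb2_4} as an immediate byproduct, while the paper argues via $L^T(W)^\perp\subset\ker(L)$ (implicitly using that $L^T(W)$ is closed) and then proves \ref{lem6771bbb2_4} separately from \ref{lem6771bbb2_2} and \ref{lem6771bbb2_3}; your route is marginally more self-contained in the infinite-dimensional setting you flag.
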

\begin{proof}
	To show~\ref{lem6771bbb2_1}, notice that, if $w\in L(V)^\perp$, then
	\[
	\langle w,w \rangle
	= \frac1{\lambda^2}\langle L^Tw,L^Tw \rangle 
	= \frac1{\lambda^2}\langle w,LL^Tw \rangle
	= 0 .
	\]
	
	For~\ref{lem6771bbb2_2}, we have that, 
	on the one hand, if $w\in W$ then, for all $v\in \ker(L)$, 
	$\langle L^Tw,v \rangle = \langle w,Lv \rangle = 0$.
	Thus, $L^T(W)\subset\ker(L)^\perp$.
	On the other hand, if $v\in L^T(W)^\perp$, then, for all $w\in W$,
	$\langle Lv,w \rangle = \langle v,L^Tw \rangle = 0$,
	that is, $v\in\ker(L)$.
	Thus, $L^T(W)^\perp \subset\ker(L)$, and therefore $L^T(W) \supset\ker(L)^\perp$.
	
	Next, we show~\ref{lem6771bbb2_3}:
	If $w\in W$, then, for all $\bar w\in W$, we have
	$\langle LL^Tw,\bar w \rangle 
	= \langle L^Tw,L^T\bar w \rangle
	= \lambda^2 \langle w,\bar w \rangle$.
	Therefore, $LL^Tw=\lambda^2w$.
	
	Finally, we show~\ref{lem6771bbb2_4}:
	if $v\in\ker(L)^\perp$, then there is $w\in W$ such that $L^Tw=v$ and thus
	$L^TLv = L^TLL^Tw = L^T(\lambda^2w) = \lambda^2v$.
	
	The last statement~\ref{lem6771bbb2_5} is an easy consequence of the previous ones.
	Indeed, if $v,\bar v\in\ker(L)^\perp$, then
	$\langle Lv,L\bar v \rangle
	\overset{\eqref{eq6708401d}}= \lambda^{-2}\langle L^TLv,L^TL\bar v \rangle
	\overset{\ref{lem6771bbb2_4}}= \lambda^{-2}\langle \lambda^2v,\lambda^2\bar v \rangle
	= \lambda^2 \langle v,\bar v \rangle$,
	which implies~\ref{lem6771bbb2_5}.
\end{proof}

\begin{proposition}\label{prop676ff612}
	Let $L:V\to W$ be a linear map and $\lambda>0$.
	Let $\ker(L)^\perp\lhd V$ be the subspace of $V$ orthogonal to $\ker(L)$ and $\pi_{\ker(L)^\perp}:V\to \ker(L)^\perp$ the orthogonal projection.
	The following assertions are equivalent:
	\begin{enumerate}[label=(\roman*)]
	\item\label{prop676ff612_1}
	$L$ is a homothetic projection of factor $\lambda$.
	\item\label{prop676ff612_2}
	$L$ is surjective, and $L^TL = \lambda^2 \pi_{V^\perp}$.
	\item\label{prop676ff612_3}
	$LL^T = \lambda^2\Id_W$.
	\item\label{prop676ff612_4}
	The map $\ker(L)^\perp\to W$, $v\mapsto Lv/\lambda$, is an isometry.
	\item\label{prop676ff612_5}
	The adjoint map $L^*:W^*\to V^*$ between the dual spaces is a homothetic embedding of factor $\lambda$.
	\end{enumerate}
\end{proposition}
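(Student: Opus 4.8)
The plan is to prove the five equivalences by a short cycle among \ref{prop676ff612_1}--\ref{prop676ff612_4}, harvesting most of the implications out of \ref{prop676ff612_1} directly from Lemma~\ref{lem6771bbb2}, and then to treat \ref{prop676ff612_5} separately by identifying each Hilbert space with its dual via Riesz representation.

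First I would record that \ref{prop676ff612_1}$\IFF$\ref{prop676ff612_3} is immediate from the definition of the transpose: since $\langle L^Tw_1,L^Tw_2\rangle_V = \langle w_1,LL^Tw_2\rangle_W$ for all $w_1,w_2\in W$, the homothety identity \eqref{eq6708401d} holds for every pair precisely when $LL^Tw_2=\lambda^2w_2$ for every $w_2$. (The forward direction is also Lemma~\ref{lem6771bbb2_3}.) Next, \ref{prop676ff612_1}$\THEN$\ref{prop676ff612_2} and \ref{prop676ff612_1}$\THEN$\ref{prop676ff612_4} are essentially read off from the Lemma: surjectivity is \ref{lem6771bbb2_1}, while \ref{lem6771bbb2_4} gives $L^TLv=\lambda^2v$ on $\ker(L)^\perp$, which together with $L^TL=0$ on $\ker(L)$ yields $L^TL=\lambda^2\pi_{\ker(L)^\perp}$; and \ref{prop676ff612_4} is verbatim \ref{lem6771bbb2_5}.

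To close the cycle I would then prove \ref{prop676ff612_2}$\THEN$\ref{prop676ff612_3} and \ref{prop676ff612_4}$\THEN$\ref{prop676ff612_1}. For the former, surjectivity lets me write any $w\in W$ as $w=Lv$ with $v\in\ker(L)^\perp$, whence $LL^Tw=LL^TLv=L(\lambda^2v)=\lambda^2w$. For the latter, the key point is that $L^T(W)\subseteq\ker(L)^\perp$ always holds, so the isometry hypothesis (read as preservation of inner products on $\ker(L)^\perp$) gives $\langle L^TLv_1,v_2\rangle=\lambda^2\langle v_1,v_2\rangle$ for all $v_1,v_2\in\ker(L)^\perp$, hence $L^TLv=\lambda^2v$ there; writing $w_i=Lv_i/\lambda$ with $v_i\in\ker(L)^\perp$ then gives $L^Tw_i=\lambda v_i$ and the identity \eqref{eq6708401d} follows. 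The one subtlety I must flag is that \ref{prop676ff612_4} has to be understood as an \emph{onto} isometry: a mere isometric embedding of $\ker(L)^\perp$ into $W$ (think of $\R^2\hookrightarrow\R^3$) does not force $L$ to be surjective, so I would make explicit that ``isometry'' means isometric isomorphism, which is exactly what supplies the surjectivity used to pass back to $L(\ker(L)^\perp)=W$.

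Finally, for \ref{prop676ff612_1}$\IFF$\ref{prop676ff612_5}, I would use the Riesz isomorphisms $R_V\colon V\to V^*$ and $R_W\colon W\to W^*$, which are isometries for the dual inner products, together with the relation $L^*\circ R_W=R_V\circ L^T$ between the Banach-space adjoint $L^*$ and the transpose $L^T$. Since the Riesz maps are isometric, $L^*$ is a homothetic embedding of factor $\lambda$ if and only if $L^T$ is, which is precisely \ref{prop676ff612_1}. I expect this dual identification, together with the correct reading of ``isometry'' in \ref{prop676ff612_4}, to be the only genuinely delicate points; everything else is bounded-operator bookkeeping, and I would note in passing that the hypotheses \ref{prop676ff612_3} and \ref{prop676ff612_4} already force $L$ to be bounded, so that $L^T$ is well defined even without a finite-dimensionality assumption.
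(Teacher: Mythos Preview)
Your proposal is correct and follows essentially the same route as the paper: harvest \ref{prop676ff612_1}$\THEN$\ref{prop676ff612_2},\ref{prop676ff612_3},\ref{prop676ff612_4} from Lemma~\ref{lem6771bbb2}, close the cycle via \ref{prop676ff612_2}$\THEN$\ref{prop676ff612_3}$\THEN$\ref{prop676ff612_1}, and handle \ref{prop676ff612_5} by the Riesz identification of $L^*$ with $L^T$. The only cosmetic difference is that the paper closes the loop from \ref{prop676ff612_4} by proving \ref{prop676ff612_4}$\THEN$\ref{prop676ff612_3} (computing $L^T=\lambda^2 (L|_{\ker(L)^\perp})^{-1}$ directly), whereas you go \ref{prop676ff612_4}$\THEN$\ref{prop676ff612_1}; both arguments use the surjectivity built into the word ``isometry'', and your explicit flag about this reading is apt.
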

\begin{proof}
	The equivalence of \ref{prop676ff612_1} and \ref{prop676ff612_4} is a consequence of the equivalence between the transpose $L^T$ and the adjoint map $L^*$ in Hilbert spaces.
	
	Lemma~\ref{lem6771bbb2} shows that \ref{prop676ff612_1} implies \ref{prop676ff612_2}, \ref{prop676ff612_3}, and \ref{prop676ff612_4}.
	
	We will next show the remaining implications.
	
	$\ref{prop676ff612_2}\THEN\ref{prop676ff612_3}$:
	Let $w\in W$.
	Since $L$ is surjective, there is $v\in \ker(L)^\perp$ such that $Lv=w$.
	Since $L^TL = \lambda^2 \pi_{\ker(L)^\perp}$, then
	$L^Tw = L^TLv = \lambda^2 v$.
	Thus $LL^T w = \lambda^2 Lv = \lambda^2w$.
	
	$\ref{prop676ff612_3}\THEN\ref{prop676ff612_1}$:
	Let $w,\bar w\in W$.
	Then $\langle L^Tw,L^T\bar w \rangle = \langle w,LL^T\bar W \rangle = \lambda^2\langle w,\bar w \rangle$.
	
	$\ref{prop676ff612_4}\THEN\ref{prop676ff612_3}$:
	Set
	\[
	f := (L|_{\ker(L)^\perp})^{-1}:W\to\ker(L)^\perp .
	\]
	Then, for every $w\in W$ and $v\in V$,
	\[
	\langle L^Tw,v \rangle
	= \langle w,Lv \rangle
	= \langle Lf(w),L\pi_{\ker(L)^\perp}v \rangle
	\overset{\ref{prop676ff612_4}}= \lambda^2 \langle f(w),\pi_{\ker(L)^\perp}v  \rangle
	=  \langle \lambda^2 f(w), v  \rangle  .
	\]
	Therefore, $L^T = \lambda^2 f$.
	It follows that 
	$LL^Tw = L(\lambda^2 f(w)) = \lambda^2 w$, that is,~\ref{prop676ff612_3}.
\end{proof}

\subsection{Conformal submersion of sub-Riemannian Lie groups}\label{sec670d0abe}
Let $G$ and $H$ be sub-Riemannian Lie groups, and $\Omega\subset G$ open. 
A \emph{$C^1$-conformal submersion} of factor $\lambda:\Omega\to(0,+\infty)$ is a $C^1$ contact map $F:\Omega\to H$,
such that, for every $p\in\Omega$, the restriction $DF(p)|_{V(G)}:V(G)\to V(H)$ is a linear homothetic projection of factor $\lambda(p)$.

In particular, we have $DF(p)[V(G)]=V(H)$ for all $p\in\Omega$.
Since $V(H)$ is bracket-generating, $F$ is a smooth submersion, that is, the differential $DF(p):\frk g\to\frk h$ is surjective for all $p\in\Omega$.

\subsection{Gradient}
Let $G$ be a sub-Riemannian Lie group and $\Omega\subset G$ open.
The \emph{(horizontal) gradient} of a $C^1$-function $u:\Omega\to\R$ is 
$\grad_Gu:\Omega\to V(G)$ defined by
\begin{equation}\label{hgradDef1}
Du(p)[v] = \langle \grad_Gu(p),v \rangle_G,
\qquad\forall p\in\Omega,\forall v\in V(G) .
\end{equation}
If we want to see the gradient as a vector field on $G$, then we write
\[
\tilde\grad_Gu(p) := dL_p[\grad_Gu(p)] \in \tilde V(G)_p \subset T_p G .
\]

If $X_1,\dots,X_r$ is an orthonormal basis of $V(G)$, 
then, for all $u\in C^1(\Omega)$,
\begin{equation}\label{eq670d3342}
 	\grad_G u 
	= \sum_{i=1}^r (\tilde X_i u) X_i 
	= \sum_{i=1}^r Du[X_i] X_i .
\end{equation}

Let $G$ and $H$ be sub-Riemannian Lie groups and $\Omega\subset G$ open.
If $F:\Omega\to H$ is a contact map, then,
for all $p\in \Omega$, $v\in V(G)$ and $u:H\to\R$ smooth, Proposition~\ref{PropChainRule1_2} and~\eqref{hgradDef1} imply that
\begin{equation}\label{eq670e18fb}
\langle \grad_G(u\circ F)(p) , v \rangle_G = \langle \grad_Hu(F(p)),DF(p)[v] \rangle_H .
\end{equation}

The gradient is left-invariant, in the sense that
if $u\in C^1(\Omega)$, $a\in G$, 
and $v\in V(G)$,
then
\begin{equation}\label{eq67058b31}
	\grad_G(u\circ L_a) = (\grad_Gu)\circ L_a .
\end{equation}
Indeed, if $p\in a^{-1}\Omega$,
\[
	\langle \grad_G(u\circ L_a)(p) , v \rangle_G 
	\overset{\eqref{eq670e18fb}}= \langle \grad_Gu(L_a(p)),DL_a(p)[v] \rangle_G
	\overset{\ref{PropChainRule1_3}}=  \langle \grad_Gu(L_a(p)),v \rangle_G .
\]

\subsection{Haar measure and modular function}\label{sec670d69d9}
Let $G$ be a Lie group, let $\vol_G$ denote a left-invariant Haar measure on $G$. 
With a slight abuse of notation, we write $\dd\vol_G$ for the volume form that represents the measure $\vol_G$.

The measure $\vol_G$ is left invariant in the sense that
\begin{equation}\label{eq6762c376}
	\forall a\in G, \forall f\in L^1(\vol_G),
	\qquad
	\int_G f(y) \dd\vol_G(y) = \int_G f(ax)\dd\vol_G(x) .
\end{equation}
Equivalently, the volume form~$\dd\vol_G$ is left invariant, in the sense that
\[
	\forall a,x\in G
	\qquad
	dL_a|_x^* \dd\vol_G(ax) = \dd\vol_G(x) .
\]


The \emph{modular function} of a Lie group $G$ is the smooth group morphism $\mu_G:G\to (0,\infty)$ given by
\[
\mu_G(g) := \det(\Ad_g) ,
\]
where $\Ad_g:\frk g\to\frk g$ is the adjoint representation of $G$.
More explicitly, $\Ad_g = DC_g$ where $C_g(x) = gxg^{-1}$ is the conjugation by $g$.
See~\cite[Chapter 11]{MR1681462} for further details.

The role of the modular function can be described as follows.
If $g\in G$, then $R_g^*\dd\vol_G$ is again a left-invariant volume form, because left and right translations commute with each other.
Therefore, there is a constant $\mu>0$ such that $R_g^*\dd\vol_G = \mu\dd\vol_G$:
clearly, such constant depends on $g$ but not on the choice of $\dd\vol_G$.
One can compute $\mu$ and see that it is exactly $\mu_G(g)$.
In summary, for all $g\in G$,
\begin{equation}\label{eq6762d704}
	C_g^*\dd\vol_G = R_g^*\dd\vol_G = \mu_G(g) \dd\vol_G .
\end{equation}
Moreover, for all $g\in G$ and $f\in L^1_{\rm loc}(\vol_G)$,
\begin{equation}
	 \int_G f(xg) \dd\vol_G(x) = \mu_G(g) \int_G f(x) \dd\vol_G(x) .
\end{equation}

If $\mu_G\equiv1$, then $G$ is called \emph{unimodular}.
Left-invariant Haar measures on unimodular Lie groups are also right invariant.

\subsection{Divergence}

Let $G$ be a Lie group, let $\vol_G$ denote a left-invariant Haar measure on $G$ and let $\Omega\subset G$ be open. 

The \emph{Lie derivative} of a differential form $\omega$ on $G$ along a vector field $\tilde X\in\Gamma^1(T\Omega)$ is the continuous differential form
\[
\scr L_{\tilde X}\omega := \left.\frac{\dd}{\dd t}\right|_{t=0}	(\Phi_{\tilde X}^{t})^*\omega ,
\]
where $\Phi_{\tilde X}$ is the flow of $\tilde X$.

The \emph{divergence} of a $C^1$ vector field $\tilde X\in \Gamma^1(T\Omega)$ is 
the real-valued function $\div_G\tilde X:\Omega\to\R$
defined by the identity
\begin{equation}\label{LieDivdef}
	\scr L_{\tilde X}\dd\vol_G = (\div_G\tilde X) \dd\vol_G ,
\end{equation}
where $\scr L$ is the Lie derivative. 
Notice that the divergence does not depend on the choice of a particular Haar measure of $G$.

\begin{lemma}\label{lem6771cc90}
	Let $\Omega\subset G$ be an open subset of $G$,
	and $\tilde X\in \Gamma^1(T\Omega)$.
	A continuous function $f:\Omega\to\R$ is the divergence of $\tilde X$ with respect to $\vol_G$, i.e., $f=\div_G\tilde X$,
	if and only if, for all $\phi\in C^1_c(\Omega)$,
	\begin{equation}\label{eq6762ac39}
	\int_\Omega \phi\cdot f  \dd\vol_G
	= - \int_\Omega d\phi[\tilde X] \dd\vol_G .
	\end{equation}
\end{lemma}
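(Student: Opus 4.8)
The plan is to reduce the defining identity \eqref{LieDivdef} to an integration-by-parts statement by exploiting that $\dd\vol_G$ is a top-degree form. Write $n=\dim G$ and set $\omega:=\dd\vol_G$. As an $n$-form, $\omega$ is automatically closed, $d\omega=0$, so Cartan's magic formula $\scr L_{\tilde X}\omega = d(\iota_{\tilde X}\omega)+\iota_{\tilde X}(d\omega)$ collapses to $\scr L_{\tilde X}\omega = d(\iota_{\tilde X}\omega)$. Combined with \eqref{LieDivdef}, this gives the pointwise identity $(\div_G\tilde X)\,\omega = d(\iota_{\tilde X}\omega)$. Since $\tilde X$ is $C^1$, the $(n-1)$-form $\iota_{\tilde X}\omega$ is $C^1$ and its exterior derivative is continuous, consistently with the fact (recorded in the definition of the Lie derivative above) that $\scr L_{\tilde X}\omega$ is a continuous form.

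Next I would test this identity against $\phi\in C^1_c(\Omega)$. Using the Leibniz rule $d(\phi\,\iota_{\tilde X}\omega)=d\phi\wedge \iota_{\tilde X}\omega+\phi\,d(\iota_{\tilde X}\omega)$ together with Stokes' theorem, which applies because $\phi\,\iota_{\tilde X}\omega$ is a compactly supported $C^1$ form on $\Omega$ and hence $\int_\Omega d(\phi\,\iota_{\tilde X}\omega)=0$, I obtain
\[
\int_\Omega \phi\, d(\iota_{\tilde X}\omega) = -\int_\Omega d\phi\wedge \iota_{\tilde X}\omega.
\]
It then remains to identify the right-hand integrand. Applying the antiderivation $\iota_{\tilde X}$ to the vanishing $(n+1)$-form $d\phi\wedge\omega$ gives $0=(d\phi[\tilde X])\,\omega - d\phi\wedge \iota_{\tilde X}\omega$, hence the pointwise identity $d\phi\wedge \iota_{\tilde X}\omega = d\phi[\tilde X]\,\omega$. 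Substituting this back and recalling $(\div_G\tilde X)\,\omega=d(\iota_{\tilde X}\omega)$ yields $\int_\Omega \phi\,(\div_G\tilde X)\,\dd\vol_G = -\int_\Omega d\phi[\tilde X]\,\dd\vol_G$, which is exactly \eqref{eq6762ac39} with $f=\div_G\tilde X$. This settles the ``only if'' direction.

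For the converse, suppose a continuous $f$ satisfies \eqref{eq6762ac39} for all $\phi\in C^1_c(\Omega)$. Subtracting the identity just established for $\div_G\tilde X$ gives $\int_\Omega \phi\,(f-\div_G\tilde X)\,\dd\vol_G=0$ for every $\phi\in C^1_c(\Omega)$. Since $f-\div_G\tilde X$ is continuous and $\dd\vol_G$ is a smooth nonvanishing volume form, the fundamental lemma of the calculus of variations forces $f\equiv\div_G\tilde X$ on $\Omega$.

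All the computations are standard, and the only points demanding care are bookkeeping ones: justifying Cartan's formula and Stokes' theorem under the mere $C^1$ regularity of $\tilde X$. I expect this to be the main (mild) obstacle. One resolves it by noting that the flow of a $C^1$ field is $C^1$ in the space variable, so $\scr L_{\tilde X}\omega$ is a well-defined continuous form and Cartan's identity persists (by the usual coordinate computation or a smooth approximation of $\tilde X$), while Stokes applies verbatim to the compactly supported $C^1$ form $\phi\,\iota_{\tilde X}\omega$. One should also keep track of the antiderivation sign $\iota_{\tilde X}(d\phi\wedge\omega)=(\iota_{\tilde X}d\phi)\wedge\omega-d\phi\wedge\iota_{\tilde X}\omega$, which is what produces the correct sign in \eqref{eq6762ac39}.
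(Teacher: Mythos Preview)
Your proof is correct, but it follows a different route from the paper's. The paper works directly with the flow definition of the Lie derivative: it first records the Leibniz identity
\[
\scr L_{\tilde X}(\phi\,\dd\vol_G) = d\phi[\tilde X]\,\dd\vol_G + \phi\,(\div_G\tilde X)\,\dd\vol_G,
\]
and then shows $\int_\Omega \scr L_{\tilde X}(\phi\,\dd\vol_G)=0$ by cutting off $\tilde X$ to a compactly supported $\tilde Y$ (so the flow is a global diffeomorphism of $\Omega$), using invariance of the integral under pullback by $\Phi_{\tilde Y}^t$, and differentiating at $t=0$. Your argument instead goes through Cartan's magic formula to rewrite $(\div_G\tilde X)\,\omega = d(\iota_{\tilde X}\omega)$, then applies Stokes' theorem to the compactly supported form $\phi\,\iota_{\tilde X}\omega$ and the interior-product identity $d\phi\wedge\iota_{\tilde X}\omega = d\phi[\tilde X]\,\omega$. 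Both are standard; your approach avoids the cutoff/flow argument at the cost of invoking Cartan's formula under $C^1$ regularity, which you flag and handle. You also spell out the converse via the fundamental lemma of the calculus of variations, which the paper leaves implicit.
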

\begin{proof}
	We start with the following standard identity of differential forms (see~\cite[Proposition 12.32]{MR2954043}) for $\tilde X\in \Gamma^1(T\Omega)$ and $\phi\in C^1(\Omega)$:
	\begin{equation}\label{eq6762bec9}
	\begin{aligned}
	\scr L_{\tilde X}(\phi\dd\vol_G) 
	&= d\phi[\tilde X] \cdot\dd\vol_G + \phi\cdot \scr L_{\tilde X}\dd\vol_G \\
	&= d\phi[\tilde X] \cdot\dd\vol_G + \phi\cdot (\div_G\tilde X) \dd\vol_G , 
	\end{aligned}
	\end{equation}
	where $\phi\in C^1(\Omega)$.
	The lemma follows from the fact that, for all $\phi\in C^1_c(\Omega)$
	\begin{equation}\label{eq6762ad54}
	\int_\Omega \scr L_{\tilde X}(\phi\vol_G) = 0 .
	\end{equation}
	To prove~\eqref{eq6762ad54} given $\phi\in C^1_c(\Omega)$, one takes $\psi\in C^\infty_c(\Omega)$ such that $\psi=1$ on a neighborhood of the support of $\phi$.
	Then, the vector field $\tilde Y:=\psi\tilde X$ has compact support in $\Omega$ and thus it is complete.
	Moreover, there exists $\epsilon>0$ such that, if $|t|<\epsilon$, then $\Phi_{\tilde Y}^t(p) = \Phi_{\tilde X}^t(p)$ for all $p$ in the support of $\phi$.
	Hence, for $|t|<\epsilon$, we have $(\Phi_{\tilde X}^t)^*(\phi\dd\vol_G) = (\Phi_{\tilde Y}^t)^*(\phi\dd\vol_G)$.
	Since $\Phi_{\tilde Y}^t$ is a orientation-preserving diffeomorphism $\Omega\to\Omega$, then (see~\cite[Proposition 16.6]{MR2954043})
	\[
	\int_\Omega (\Phi_{\tilde Y}^t)^*(\phi\dd\vol_G)
	= \int_\Omega \phi\dd\vol_G .
	\]
	It follows that
	\begin{align*}
		0 
		&= \left.\frac{\dd}{\dd t}\right|_{t=0} \int_\Omega (\Phi_{\tilde Y}^t)^*(\phi\dd\vol_G) 
		= \left.\frac{\dd}{\dd t}\right|_{t=0} \int_\Omega (\Phi_{\tilde X}^t)^*(\phi\dd\vol_G) \\
		&= \int_\Omega \left.\frac{\dd}{\dd t}\right|_{t=0} (\Phi_{\tilde X}^t)^*(\phi\dd\vol_G) 
		= \int_\Omega \scr L_{\tilde X}(\phi\dd\vol_G) .
	\end{align*}
	This proves~\eqref{eq6762ad54} and thus the lemma.
\end{proof}
 
If $\tilde X\in \Gamma^1(T\Omega)$ is a $C^1$ vector field and $f\in C^1(\Omega)$, then (see~\eqref{eq6762bec9} and~\cite[Proposition 12.32]{MR2954043})
\begin{equation}\label{eq670d3315}
\div_G(f\tilde X) = df[\tilde X] + f \div_G(\tilde X) .
\end{equation}

\begin{lemma}
	The divergence is left invariant, in the sense that,
	for all $\tilde X\in\Gamma^1(TG)$ and $a\in G$,
	\begin{equation}\label{eq676300e5}
	\div_G((L_a)_*\tilde X) = (\div_G\tilde X)\circ L_{a^{-1}} .
	\end{equation}
\end{lemma}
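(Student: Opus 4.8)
The plan is to reduce the identity to the naturality of the Lie derivative under the diffeomorphism $L_a$, combined with the left-invariance of the volume form. Throughout I write $(L_a)_*\omega := (L_{a^{-1}})^*\omega$ for the pushforward of a form; the left-invariance recorded after \eqref{eq6762c376} says precisely $L_b^*\dd\vol_G = \dd\vol_G$ for every $b\in G$, hence $(L_a)_*\dd\vol_G = (L_{a^{-1}})^*\dd\vol_G = \dd\vol_G$.

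First I would recall the defining relation \eqref{LieDivdef}: for any $C^1$ field $\tilde Y$, the function $\div_G\tilde Y$ is the unique continuous one satisfying $\scr L_{\tilde Y}\dd\vol_G = (\div_G\tilde Y)\dd\vol_G$. Applying this with $\tilde Y = (L_a)_*\tilde X$, it suffices to identify $\scr L_{(L_a)_*\tilde X}\dd\vol_G$ with $((\div_G\tilde X)\circ L_{a^{-1}})\,\dd\vol_G$. I would obtain this from the chain $\scr L_{(L_a)_*\tilde X}\dd\vol_G = \scr L_{(L_a)_*\tilde X}((L_a)_*\dd\vol_G) = (L_a)_*(\scr L_{\tilde X}\dd\vol_G) = (L_a)_*((\div_G\tilde X)\dd\vol_G)$, where the first equality uses left-invariance of $\dd\vol_G$, the second is the naturality identity $\phi_*(\scr L_X\omega) = \scr L_{\phi_*X}(\phi_*\omega)$ with $\phi = L_a$, and the third is \eqref{LieDivdef}. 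Pushing forward a scalar-times-form, $(L_a)_*((\div_G\tilde X)\dd\vol_G) = ((\div_G\tilde X)\circ L_{a^{-1}})\,(L_a)_*\dd\vol_G = ((\div_G\tilde X)\circ L_{a^{-1}})\,\dd\vol_G$, using left-invariance once more. Comparing with \eqref{LieDivdef} for $(L_a)_*\tilde X$ yields \eqref{eq676300e5}.

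The main obstacle here is purely bookkeeping: keeping the pushforward/pullback conventions consistent so that $a$ emerges as $L_{a^{-1}}$ at the end, and invoking the correct naturality statement for the Lie derivative. No analytic difficulty is involved once these conventions are fixed.

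Alternatively, and more in keeping with the weak machinery just developed, I would verify the claim through Lemma~\ref{lem6771cc90}: set $f := (\div_G\tilde X)\circ L_{a^{-1}}$ and check, for every $\phi\in C^1_c(G)$, the identity $\int_G \phi f\dd\vol_G = -\int_G d\phi[(L_a)_*\tilde X]\dd\vol_G$. For the left-hand side, the substitution $y = ax$ via left-invariance \eqref{eq6762c376} turns $\int_G \phi(y)(\div_G\tilde X)(a^{-1}y)\dd\vol_G(y)$ into $\int_G (\phi\circ L_a)\cdot \div_G\tilde X\,\dd\vol_G$. For the right-hand side, the chain-rule identity $d\phi[(L_a)_*\tilde X]\circ L_a = d(\phi\circ L_a)[\tilde X]$ together with the same substitution turns $-\int_G d\phi[(L_a)_*\tilde X]\dd\vol_G$ into $-\int_G d(\phi\circ L_a)[\tilde X]\dd\vol_G$. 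These two expressions agree by Lemma~\ref{lem6771cc90} applied to $\tilde X$ and the test function $\phi\circ L_a\in C^1_c(G)$. The only care needed is the chain-rule identity for $d\phi$ composed with $L_a$ and tracking the change of variables; with these in hand, Lemma~\ref{lem6771cc90} identifies $f$ as $\div_G((L_a)_*\tilde X)$, proving \eqref{eq676300e5}.
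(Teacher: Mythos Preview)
Your proposal is correct. Your second approach---verifying the weak identity of Lemma~\ref{lem6771cc90} for the test function $\phi\circ L_a$ via the chain rule and the left-invariance~\eqref{eq6762c376} of the Haar measure---is exactly the paper's proof, with the same substitutions and the same appeal to Lemma~\ref{lem6771cc90}.

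Your first approach is a genuine and cleaner alternative: rather than passing through integration against test functions, you work directly with the defining relation~\eqref{LieDivdef} and invoke the naturality identity $\phi_*(\scr L_X\omega) = \scr L_{\phi_*X}(\phi_*\omega)$ for the diffeomorphism $\phi=L_a$. This is more conceptual and avoids the bookkeeping of the integral substitutions; it also makes transparent that the statement is really just the functoriality of the Lie derivative combined with the invariance of $\dd\vol_G$. The paper's route has the minor advantage of staying within the weak framework it just set up (so no separate justification of the naturality identity for $C^1$ fields is needed), but your first argument is the more illuminating one.
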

\begin{proof}
	Let $\tilde X\in\Gamma^1(TG)$ and $a\in G$.
	Then we have:
	\begin{align*}
		\int_\Omega \phi\cdot\div_G((L_a)_*\tilde X) \dd\vol_G
		&\overset{L.~\ref{lem6771cc90}}= - \int_\Omega d\phi[(L_a)_*\tilde X] \dd\vol_G \\
		&= - \int_\Omega d\phi[dL_a|_{a^{-1}x}\tilde X(a^{-1}x) ] \dd\vol_G(x) \\
		&= - \int_\Omega d(\phi\circ L_a)|_{a^{-1}x} [\tilde X(a^{-1}x)] \dd\vol_G(x) \\
		&\overset{\eqref{eq6762c376}}=  - \int_{a^{-1}\Omega} d(\phi\circ L_a)|_{y} [\tilde X(y)] \dd\vol_G(y) \\
		&\overset{L.~\ref{lem6771cc90}}= \int_{a^{-1}\Omega} (\phi\circ L_a)(y) \div_G(\tilde X)(y) \dd\vol_G(y) \\
		&\overset{\eqref{eq6762c376}}= \int_{\Omega} (\phi\circ L_a)(ax) \div_G(\tilde X)(ax) \dd\vol_G(x) \\
		&= \int_\Omega \phi(x) \div_G\tilde X(a^{-1}x) \dd\vol_G(x) .
	\end{align*}
\end{proof}

\begin{lemma}
	If $X\in\frk g$, then the divergence of the left-invariant vector field $\tilde X\in\Gamma(TG)$ is the constant
	\begin{equation}\label{eq670d32ac}
	\div_G\tilde X = D\mu_G[X] = \trace(\ad_X) . 
	\end{equation}
	In particular, $G$ is unimodular if and only if $\div_G\tilde X= 0$ for all left-invariant vector fields $\tilde X$.
\end{lemma}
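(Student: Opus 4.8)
The plan is to compute $\div_G\tilde X$ directly from its definition \eqref{LieDivdef} as a Lie derivative, exploiting two structural facts: the flow of a left-invariant vector field is a one-parameter family of right translations, and right translations rescale the Haar volume form by the modular function via \eqref{eq6762d704}.

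First I would identify the flow of $\tilde X$. The integral curve of $\tilde X$ issuing from $p$ is $t\mapsto p\exp(tX)$, since $\frac{\dd}{\dd t}p\exp(tX) = dL_{p\exp(tX)}[X] = \tilde X(p\exp(tX))$. Hence the time-$t$ flow is the right translation $\Phi_{\tilde X}^t = R_{\exp(tX)}$. Substituting into the definition of the Lie derivative and invoking \eqref{eq6762d704} gives
\[
\scr L_{\tilde X}\dd\vol_G
= \left.\frac{\dd}{\dd t}\right|_{t=0} (R_{\exp(tX)})^*\dd\vol_G
= \left.\frac{\dd}{\dd t}\right|_{t=0} \mu_G(\exp(tX))\,\dd\vol_G .
\]
The factor $\mu_G(\exp(tX))$ is a scalar, so differentiating at $t=0$ and comparing with \eqref{LieDivdef} shows that $\div_G\tilde X$ is the constant $\frac{\dd}{\dd t}|_{t=0}\mu_G(\exp(tX))$. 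Since $\mu_G$ is a group morphism, $\mu_G(p)^{-1}\mu_G(p\exp(tX)) = \mu_G(\exp(tX))$ is independent of $p$, so this constant is exactly $D\mu_G[X]$, yielding the first equality in \eqref{eq670d32ac}.

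For the second equality I would compute $D\mu_G[X]$ from $\mu_G(g) = \det(\Ad_g)$ together with the standard identities $\Ad_{\exp(tX)} = \exp(t\,\ad_X)$ and $\det(\exp A) = \exp(\trace A)$. These give $\mu_G(\exp(tX)) = \exp(t\,\trace(\ad_X))$, whose derivative at $t=0$ is $\trace(\ad_X)$, as claimed.

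Finally, the ``in particular'' follows at once: $\div_G\tilde X = 0$ for every left-invariant field iff $D\mu_G[X]=0$ for all $X\in\frk g$, i.e., iff the differential of $\mu_G$ at the identity vanishes; since $\mu_G$ is a morphism into $(0,\infty)$ and $G$ is connected, this is equivalent to $\mu_G\equiv1$, that is, to $G$ being unimodular. I do not expect a serious obstacle here: the only points requiring care are the orientation and direction conventions (that left-invariant fields flow by right translations, and the correct reading of \eqref{eq6762d704}) and the invocation of the two exponential-map identities; connectedness of $G$ is precisely what lets one pass from the vanishing of $d\mu_G$ at the identity to $\mu_G\equiv1$.
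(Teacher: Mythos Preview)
Your proof is correct and follows essentially the same route as the paper: identify the flow of $\tilde X$ as right translations, apply \eqref{eq6762d704}, and differentiate at $t=0$. The only cosmetic differences are that you invoke $\det(\exp A)=\exp(\trace A)$ where the paper cites the Jacobi formula for the derivative of the determinant, and you spell out the unimodularity equivalence, which the paper leaves implicit.
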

\begin{proof}
	If $X\in\frk g$, then the flow of the left-invariant vector field $\tilde X$ is
	\[
	\Phi^t_{\tilde X}(g) = g\exp(tX) = R_{\exp(tX)}(g) .
	\]
	From~\eqref{eq6762d704}, it follows that
	\begin{align*}
		\left.\frac{\dd}{\dd t}\right|_{t=0} (\Phi^t_{\tilde X})^* \dd\vol_G 
		&= \left.\frac{\dd}{\dd t}\right|_{t=0} R_{\exp(tX)}^* \dd\vol_G \\
		&= \left.\frac{\dd}{\dd t}\right|_{t=0} \mu_G(\exp(tX)) \dd\vol_G 
		= D\mu_G[X] \dd\vol_G .
	\end{align*}
	Finally, the fact that $D\mu_G[X] = \trace(\ad_X)$ follows from the Jacobi Formula for the derivative of the determinant.
	Indeed, 
	\[
	D\mu_G[X]
	= \left.\frac{\dd}{\dd t}\right|_{t=0} \det(\Ad_{\exp(tX)})
	= \left.\frac{\dd}{\dd t}\right|_{t=0} \det(e^{t\ad_X})
	\overset{\substack{\text{Jacobi}\\\text{Formula}}}= \trace(\ad_X) .
	\]
\end{proof}

\subsection{Sub-Laplacian of a sub-Riemannian Lie group}
Let $G$ be a sub-Rieman\-nian Lie group, $\Omega\subset G$ open and $\vol_G$ a fixed Haar measure on $G$.
A \emph{sub-Laplacian} on $G$ is the operator $ \laplacian_G: C^2(\Omega)  \to  C^0(\Omega)$ defined by 
\[
\laplacian_G u = \div_G(\tilde \grad_G u).
\] 
for all  $u\in C^\infty(\Omega)$. 
If $X_1,\dots,X_r$ is an orthonormal basis of $V(G)$, 
then, by~\eqref{eq67683e74}, \eqref{eq670d3342}, \eqref{eq670d3315}, and~\eqref{eq670d32ac},
\begin{equation} \label{eq670d6c55}
\begin{aligned}
	\laplacian_G u
	&= \sum_{i=1}^r (\tilde X_i^2 u + \tilde X^iu \div_G\tilde X_i ) \\
	&= \sum_{i=1}^r  D^2u[X_i,X_i] + \langle \grad_Gu,\grad_G\mu_G \rangle_G . 
\end{aligned}
\end{equation}
where $\mu_G$ is the modular function.
If $G$ is unimodular, then the above formula simplifies to the sum of squares
\[
\laplacian_G u = \sum_{i=1}^r \tilde X_i^2 u.
\]

\begin{lemma}
	The sub-Laplacian is left invariant, in the sense that, for all $u\in C^2(\Omega)$ with $\Omega\subset G$ open, and all $g\in G$,
	\begin{equation}\label{eq67058fa9}
		\laplacian_G(u\circ L_g) = (\laplacian_Gu)\circ L_g .
	\end{equation}
\end{lemma}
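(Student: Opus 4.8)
The plan is to deduce the left-invariance of $\laplacian_G$ from the two invariances already established earlier in this section: the left-invariance of the horizontal gradient in~\eqref{eq67058b31} and the left-invariance of the divergence in~\eqref{eq676300e5}. Since by definition $\laplacian_G u = \div_G(\tilde\grad_G u)$ is the composition of these two operations, the statement should follow once the scalar-level precomposition $u\mapsto u\circ L_g$ is correctly matched with the vector-field-level pushforward by $L_{g^{-1}}$. Concretely, I would first prove the single bridging identity
\[
\tilde\grad_G(u\circ L_g) = (L_{g^{-1}})_*\tilde\grad_G u
\]
and then apply the divergence to both sides.

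To establish the bridging identity, fix $g\in G$, $u\in C^2(\Omega)$, and $p\in g^{-1}\Omega$. On the one hand, by the definition of the gradient vector field and~\eqref{eq67058b31},
\[
\tilde\grad_G(u\circ L_g)(p) = dL_p[\grad_G(u\circ L_g)(p)] = dL_p[\grad_Gu(gp)].
\]
On the other hand, evaluating the pushforward at $p$ means applying $dL_{g^{-1}}|_{gp}$ to the vector $\tilde\grad_G u(gp) = dL_{gp}[\grad_G u(gp)]$, so that
\[
(L_{g^{-1}})_*\tilde\grad_G u(p) = dL_{g^{-1}}\big|_{gp}\big[dL_{gp}[\grad_G u(gp)]\big] = dL_p[\grad_G u(gp)],
\]
where the last equality is the chain rule $dL_{g^{-1}}|_{gp}\circ dL_{gp}|_1 = dL_p|_1$, valid because $L_{g^{-1}}\circ L_{gp} = L_{g^{-1}gp} = L_p$. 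Comparing the two displays gives the bridging identity.

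With the identity in hand, applying~\eqref{eq676300e5} with $a = g^{-1}$ (so that $L_{a^{-1}} = L_g$) yields
\[
\laplacian_G(u\circ L_g) = \div_G\big(\tilde\grad_G(u\circ L_g)\big) = \div_G\big((L_{g^{-1}})_*\tilde\grad_G u\big) = (\div_G\tilde\grad_G u)\circ L_g = (\laplacian_G u)\circ L_g,
\]
which is exactly~\eqref{eq67058fa9}. The only genuine obstacle is the base-point bookkeeping in the bridging identity: one must track carefully which differential of a left translation acts at which point and verify the chain-rule cancellation $dL_{g^{-1}}|_{gp}\circ dL_{gp}|_1 = dL_p|_1$. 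Once that is settled, the remaining two steps are direct substitutions into the previously proven invariances of the gradient and the divergence.
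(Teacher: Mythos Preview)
Your proof is correct and follows essentially the same approach as the paper: both establish the bridging identity $\tilde\grad_G(u\circ L_g) = (L_{g^{-1}})_*\tilde\grad_G u$ from~\eqref{eq67058b31} and the chain rule for left translations, and then apply the left-invariance of the divergence~\eqref{eq676300e5}. The only cosmetic difference is that the paper introduces $h=g^{-1}$ and runs the computation starting from $(\laplacian_G u)\circ L_g$, while you go in the opposite direction starting from $\laplacian_G(u\circ L_g)$.
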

\begin{proof}
	Fix $\Omega\subset G$ open, $u\in C^2(\Omega)$ and $g\in G$.
	Set $h=g^{-1}$.
	From~\eqref{eq67058b31}, we obtain
	\begin{equation}\label{eq676301bc}
	\begin{aligned}
		(L_h)_*(\tilde\grad_Gu)(p)
		&= dL_h \circ dL_{h^{-1}p} [(\grad_Gu)(h^{-1}p)] \\
		&\overset{\eqref{eq67058b31}}= dL_p[\grad_G(u\circ L_{h^{-1}})(p)] \\
		&= \tilde\grad_G(u\circ L_{h^{-1}})(p) .
	\end{aligned}
	\end{equation}
	Therefore, using the left invariance of the divergence, we obtain
	\begin{align*}
		(\laplacian_Gu)\circ L_g
		&= (\div_G(\tilde\grad_Gu))\circ L_g \\
		&\overset{\eqref{eq676300e5}}= \div_G((L_h)_*\tilde\grad_Gu) \\
		&\overset{\eqref{eq676301bc}}= \div_G(\tilde\grad_G(u\circ L_g)) 
		= \laplacian_G(u\circ L_g) .
	\end{align*}
\end{proof}

\section{The computation for conformal submersions}\label{sec67082cbe}

\begin{theorem}\label{thm67082cc6}
	Let $G$ and $H$ be sub-Riemannian Lie groups, and $\Omega\subset G$ open.
	Let $F:\Omega\to H$ be a $C^2$-smooth conformal submersion of factor $\lambda$ between sub-Riemannian groups, as in~\ref{sec670d0abe}.
	Define $b:\Omega\to V(H)$ as
	\[ 
	b(p) :=  \trace_G(D^2F(p)) + DF(p)[\grad_G\mu_G(p)] - \lambda(p)^2\grad_H\mu_H(F(p)) .
	\]
	where $\mu_G$ and $\mu_H$ are the modular functions of $G$ and $H$, respectively.
	
	Then,
	for all $u\in C^\infty(H)$ and $p\in\Omega$,
	\begin{equation}\label{eq67082930}
		\laplacian_G(u\circ F)(p)
		= \lambda(p)^2 \laplacian_Hu(F(p))
			+ \langle b(p) , \grad_Hu(F(p)) \rangle_H .
	\end{equation}	
\end{theorem}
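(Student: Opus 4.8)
The plan is to expand $\laplacian_G(u\circ F)$ through the intrinsic formula~\eqref{eq670d6c55}, reduce the second-order terms by the chain rule~\eqref{eq670d7a23}, and then recognize what remains as $\lambda^2\,\laplacian_Hu\circ F$ together with a first-order term whose coefficient is forced to equal $b$. Throughout I fix $u\in C^\infty(H)$ and $p\in\Omega$.

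First I would pick an orthonormal basis $X_1,\dots,X_r$ of $V(G)$ and write, via~\eqref{eq670d6c55},
\[
\laplacian_G(u\circ F)(p) = \sum_{i=1}^r D^2(u\circ F)(p)[X_i,X_i] + \langle \grad_G(u\circ F)(p),\grad_G\mu_G(p)\rangle_G .
\]
To each Hessian summand I apply~\eqref{eq670d7a23} with $A=F$ and $B=u$, splitting it into a pushed-forward Hessian term $D^2u(F(p))[DF(p)[X_i],DF(p)[X_i]]$ and a trace term $Du(F(p))[D^2F(p)[X_i,X_i]]$. Summing the trace terms gives $Du(F(p))[\trace_G(D^2F(p))]$; since the contact property yields $\trace_G(D^2F(p))\in V(H)$, this equals $\langle\grad_Hu(F(p)),\trace_G(D^2F(p))\rangle_H$ by~\eqref{hgradDef1}.

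The decisive step is the pushed-forward Hessian sum $\sum_{i=1}^r D^2u(F(p))[LX_i,LX_i]$, where $L:=DF(p)|_{V(G)}$ is by hypothesis a homothetic projection of factor $\lambda(p)$. Here I would use the freedom in the choice of orthonormal basis to adapt it to the splitting $V(G)=\ker(L)^\perp\oplus\ker(L)$: the summands indexed in $\ker(L)$ vanish, while by Lemma~\ref{lem6771bbb2} the restriction $v\mapsto Lv/\lambda$ of $L$ to $\ker(L)^\perp$ is an isometry onto $V(H)$. Thus the images of the adapted basis vectors lying in $\ker(L)^\perp$, divided by $\lambda(p)$, form an orthonormal basis $e_1,\dots,e_s$ of $V(H)$, and the sum collapses to $\lambda(p)^2\sum_{j=1}^s D^2u(F(p))[e_j,e_j]$. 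Reading~\eqref{eq670d6c55} at $F(p)$ backwards identifies this with $\lambda(p)^2\bigl(\laplacian_Hu(F(p)) - \langle\grad_Hu(F(p)),\grad_H\mu_H(F(p))\rangle_H\bigr)$. I expect this comparison of orthonormal frames across the homothetic projection to be the main obstacle, as it is the only place where the conformal-submersion hypothesis genuinely enters, and pinning down both the factor $\lambda^2$ and the vanishing on $\ker(L)$ relies precisely on Lemma~\ref{lem6771bbb2}.

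Finally, for the $G$-modular term I would apply~\eqref{eq670e18fb} with $v=\grad_G\mu_G(p)\in V(G)$ to rewrite $\langle\grad_G(u\circ F)(p),\grad_G\mu_G(p)\rangle_G = \langle\grad_Hu(F(p)),DF(p)[\grad_G\mu_G(p)]\rangle_H$, noting $DF(p)[\grad_G\mu_G(p)]\in V(H)$. Collecting the three contributions and gathering everything paired against $\grad_Hu(F(p))$, the coefficient becomes $\trace_G(D^2F(p)) + DF(p)[\grad_G\mu_G(p)] - \lambda(p)^2\grad_H\mu_H(F(p))$, which is exactly $b(p)$; this yields~\eqref{eq67082930} and closes the argument.
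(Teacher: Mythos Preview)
Your proposal is correct and follows essentially the same route as the paper's own proof: expand via~\eqref{eq670d6c55}, apply the chain rule~\eqref{eq670d7a23}, adapt the orthonormal basis of $V(G)$ to the splitting $\ker(DF(p)|_{V(G)})^\perp\oplus\ker(DF(p)|_{V(G)})$ so that the images under $DF(p)/\lambda(p)$ yield an orthonormal basis of $V(H)$, and then handle the modular term via~\eqref{eq670e18fb}. The only cosmetic difference is that you invoke Lemma~\ref{lem6771bbb2} for the isometry $\ker(L)^\perp\to V(H)$ where the paper cites the equivalent Proposition~\ref{prop676ff612}.
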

\begin{proof}
	Fix $p\in\Omega$ and $u\in C^2(H)$.
	Let  $X_1,\dots,X_r$ be an orthonormal basis of $V(G)$
	such that
	$X_{k+1},\dots,X_r$ is a basis of $\ker(DF(p))|_{V(G)}$. 
	Since $DF(p)|_{V(G)}:V(G)\to V(H)$ is a surjective homothetic projection of factor $\lambda(p)$,
	the vectors
	$Y_1 := DF(p)X_1/\lambda(p), \dots, Y_k := DF(p)X_k/\lambda(p)$ form an orthonormal basis of $V(H)$ 
	by Proposition~\ref{prop676ff612}.
	It follows that 
	\begin{equation}\label{eq67683f08}
	\sum_{i=1}^r D^2u(p)[DF(p)X_i,DF(p)X_i]
	= \lambda(p)^2 \sum_{i=1}^k D^2u(p)[Y_i,Y_i] .
	\end{equation}
	
	We can then compute:
	\begin{align*}
		\laplacian_G(u\circ F)(p)
		&\overset{\eqref{eq670d6c55}}= \sum_{i=1}^r  D^2(u\circ F)(p)[X_i,X_i] + \langle \grad_G(u\circ F)(p) , \grad_G\mu_G(p) \rangle_G \\
		&\overset{\eqref{eq670d7a23}}= \sum_{i=1}^r D^2u(F(p))[DF(p)X_i,DF(p)X_i] \\
		&\qquad	+ \sum_{i=1}^r Du(F(p)) [ D^2F(p)[X_i,X_i] ] 
			+ \langle \grad_G(u\circ F)(p) , \grad_G\mu_G(p) \rangle_G \\
		&\overset{\eqref{eq67683f08}}= \lambda(p)^2 \sum_{i=1}^k D^2u(p)[Y_i,Y_i]\\
		&\qquad	+ Du(F(p)) [ \trace(D^2F(p)) ] 
			+ \langle \grad_G(u\circ F)(p) , \grad_G\mu_G(p) \rangle_G \\
		&\overset{\eqref{eq670d6c55}}= \lambda(p)^2 \laplacian_Hu(F(p)) 
			- \lambda(p)^2 \langle \grad_Hu(F(p)) , \grad_H\mu_H(F(p)) \rangle_H \\
		&\qquad	+ \langle \grad_Hu(F(p)),\trace(D^2F(p)) \rangle_H
			+ \langle \grad_G(u\circ F)(p) , \grad_G\mu_G(p) \rangle_G .
	\end{align*}
	This concludes the proof, because
	\begin{align*}
		\langle \grad_G(u\circ F)(p) , \grad_G\mu_G(p) \rangle_G 
		&= D(u\circ F)(p)[\grad_G\mu_G(p)] \\
		&\overset{\ref{PropChainRule1_2}}= Du(F(p))[DF(p)\grad_G\mu_G(p)] \\
		&= \langle \grad_Hu(F(p)),DF(p)\grad_G\mu_G(p)  \rangle_H .
	\end{align*}
\end{proof}

\section{Characterization of homothetic submersions}\label{sec670d7088}

\begin{theorem}\label{thm670e8117}
	Let $G$ and $H$ be sub-Riemannian Lie groups, with $\Omega_G\subset G$ and $\Omega_H\subset H$ open.
	Let $P_G:C^2(\Omega_G) \to C^0(\Omega_G)$ be a differential operator on $\Omega_G$, of the form
	\[
	P_Gu = a_2 \cdot \laplacian_Gu + \langle a_1,\grad_Gu \rangle_G + a_0\cdot u,
	\qquad u\in C^2(\Omega_G)
	\]
	with $a_0:\Omega_G\to\R$, $a_1:\Omega_G\to V(G)$ and $a_2:\Omega_G\to(0,\infty)$ continuous.
	Similarly, let $P_H:C^2(\Omega_H) \to C^0(\Omega_H)$ be a differential operator on $\Omega_H$, of the form
	\[
	P_Hu = b_2 \cdot \laplacian_Hu + \langle b_1,\grad_Hu \rangle_H + b_0\cdot u,
	\qquad u\in C^2(\Omega_H).
	\]
	
	Let $F:\Omega_G\to H$ be a smooth map of class $C^2$ with $F(\Omega_G)\subset \Omega_H$ and such that, 
	for all $u\in C^2(\Omega_H)$,
	\begin{equation}\label{eq670e8147}
	P_G(u\circ F) = (P_Hu)\circ F .
	\end{equation}
	
	Then $F$ is a conformal submersion of factor $\lambda:\Omega_G\to(0,+\infty)$ with
	\[
	\lambda(p)^2 = \frac{b_2(F(p))}{a_2(p)} .
	\]
\end{theorem}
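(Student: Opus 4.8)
The plan is to exploit that the identity~\eqref{eq670e8147} holds for \emph{every} test function $u$, and to extract pointwise linear-algebraic information by a careful choice of $u$. Fix $p\in\Omega_G$, set $q:=F(p)$, and let $X_1,\dots,X_r$ and $Y_1,\dots,Y_s$ be orthonormal bases of $V(G)$ and $V(H)$. First I would test~\eqref{eq670e8147} at $p$ against functions $u\in C^2(\Omega_H)$ satisfying $u(q)=0$ and $Du(q)=0$. For such $u$, the chain rule~\ref{PropChainRule1_2} gives $D(u\circ F)(p)=Du(q)\circ DF(p)=0$, hence $\grad_G(u\circ F)(p)=0$; so on the left every summand of $P_G(u\circ F)(p)$ collapses except the pure-Hessian piece, and expanding $\laplacian_G$ via~\eqref{eq670d6c55} together with the second-order chain rule~\eqref{eq670d7a23} leaves exactly $a_2(p)\sum_{i=1}^r D^2u(q)[DF(p)X_i,DF(p)X_i]$. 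On the right, $\grad_Hu(q)=0$ and $u(q)=0$ reduce $(P_Hu)(q)$ to $b_2(q)\sum_{j=1}^s D^2u(q)[Y_j,Y_j]$.

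Next I would show that the diagonal second differentials can be prescribed freely. The computation in~\eqref{eq67683e74} uses only $\exp(tv)\exp(sv)=\exp((t+s)v)$, so it is valid for every $v\in\frk h$ and gives $D^2u(q)[v,v]=\tilde v^2u(q)$, a quantity depending only on the quadratic form $v\mapsto D^2u(q)[v,v]$. Working in exponential coordinates at $q$, for any symmetric bilinear form $\beta$ on $\frk h$ the function $u(q\exp w):=\tfrac12\beta(w,w)$, cut off by a bump equal to $1$ near $q$, lies in $C^2(\Omega_H)$ and satisfies $u(q)=0$, $Du(q)=0$, and $D^2u(q)[v,v]=\beta(v,v)$ for all $v$. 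Matching the two reduced expressions then yields, for every symmetric bilinear $\beta$ on $\frk h$,
\[
a_2(p)\sum_{i=1}^r \beta\bigl(DF(p)X_i,\,DF(p)X_i\bigr)=b_2(F(p))\sum_{j=1}^s \beta(Y_j,Y_j).
\]

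From this tensorial identity the conclusion is pure linear algebra. Taking $\beta=\phi\otimes\phi$ with $\phi\in\frk h^*$ annihilating $V(H)$ makes the right side vanish, forcing $\phi(DF(p)X_i)=0$ for all $i$; letting $\phi$ range over the annihilator of $V(H)$ gives $DF(p)[V(G)]\subseteq V(H)$, so $F$ is a contact map and $L:=DF(p)|_{V(G)}$ maps $V(G)$ into $V(H)$. Identifying symmetric $2$-tensors on $V(H)$ with self-adjoint operators via the orthonormal basis, the identity reads $a_2(p)\,LL^T=b_2(F(p))\,\Id_{V(H)}$, that is $LL^T=\lambda(p)^2\Id_{V(H)}$ with $\lambda(p)^2=b_2(F(p))/a_2(p)$. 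By Proposition~\ref{prop676ff612}, condition~\ref{prop676ff612_3}, the map $L=DF(p)|_{V(G)}$ is then a homothetic projection of factor $\lambda(p)$ (in particular surjective onto $V(H)$), which is exactly the assertion that $F$ is a conformal submersion of factor $\lambda$.

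The main obstacle is the construction and bookkeeping in the first two steps: one must verify that imposing $u(q)=0$ and $Du(q)=0$ annihilates \emph{all} lower-order contributions on both sides simultaneously, and that a single $C^2$ function realizes an arbitrary diagonal second Lie differential at $q$. A secondary subtlety is that, \emph{before} contactness is established, the vectors $DF(p)X_i$ are arbitrary elements of $\frk h$, so $\beta$ must range over all of $\operatorname{Sym}^2\frk h^*$ rather than over $\operatorname{Sym}^2 V(H)^*$; testing against annihilators of $V(H)$ is precisely what first produces the contact condition, after which the homothety identity can be read off on $V(H)$ and Proposition~\ref{prop676ff612} closes the argument.
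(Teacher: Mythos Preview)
Your proposal is correct and follows essentially the same route as the paper: choose test functions that are quadratic in $\log$-coordinates centred at $q=F(p)$, so that $u(q)=0$ and $Du(q)=0$ kill all lower-order terms on both sides, leaving a tensorial identity between second-order pieces; then read off first the contact condition and then the homothety condition, concluding via Proposition~\ref{prop676ff612}. The only cosmetic differences are that the paper first normalises to~\eqref{eq66f481ba} before testing, uses only the rank-one forms $\beta=\alpha\otimes\alpha$ (your general $\beta$ adds nothing, since you immediately specialise back to $\phi\otimes\phi$ and the rank-one forms span $\mathrm{Sym}^2\frk h^*$), and phrases the conclusion as ``$L^*$ is a homothetic embedding'' (condition~\ref{prop676ff612_5}) rather than your ``$LL^T=\lambda^2\Id$'' (condition~\ref{prop676ff612_3}).
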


\begin{proof}
	For the proof of Theorem~\ref{thm670e8117}, we will in fact use an assumption on $F$ that is slightly weaker than~\eqref{eq670e8147}, as we describe now.

Let $G$ and $H$ be sub-Riemannian Lie groups.
Let $\Omega\subset G$ be an open subset and $F:\Omega\to H$ a smooth map.
Suppose that there are functions
$a_1:\Omega\to V_1(G)$, $a_2,a_3,\lambda,b_2,b_3:\Omega\to\R$,
and $b_1:\Omega\to V_1(H)$, such that,
for every $u\in C^\infty(H)$,
\begin{equation}\label{eq670d7216}
\begin{split}
	&\laplacian_G(u\circ F) + \langle a_1 , \grad_G(u\circ F) \rangle_G + a_2 \cdot(u\circ F) + a_3 \\
	&\qquad= \lambda^2 \cdot (\laplacian_H u)\circ F + \langle b_1 , (\grad_Hu)\circ F \rangle_H + b_2 \cdot (u\circ F) + b_3 .
\end{split}
\end{equation}
The functions $\lambda, b_1, b_2, b_3$ are taken on the domain $\Omega$, but they can be of the form $\phi\circ F$ with $\phi$ defined on $H$.
In other words, the right-hand side of~\eqref{eq670d7216} is meant to be a differential operator on $H$, generalizing~\eqref{eq670e8147}.

Since~\eqref{eq670d7216} must holds for constant functions, we automatically must have that  $a_2 \cdot(u\circ F) + a_3 = b_2 \cdot (u\circ F) + b_3$ and \eqref{eq670d7216} implies that
\begin{equation}\label{eq670e195c}
	\laplacian_G(u\circ F) + \langle a_1 , \grad_G(u\circ F) \rangle_G 
	= \lambda^2 \cdot (\laplacian_H u)\circ F + \langle b_1 , (\grad_Hu)\circ F \rangle_H  ,
\end{equation}
for every $u\in C^\infty(H)$.

Using~\eqref{eq670e18fb}, we have that
$\langle a_1 , \grad_G(u\circ F) \rangle_G = \langle DF[a_1],(\grad_Hu)\circ F \rangle_H$
and thus~\eqref{eq670e195c} becomes
\begin{equation}\label{eq66f481ba}
	\laplacian_G(u\circ F) 
	= \lambda^2 \cdot (\laplacian_H u)\circ F + \langle b , (\grad_Hu)\circ F \rangle_H ,
\end{equation}
with $b:\Omega\to V(H)$ given by $b(p) := b_1(p) - DF(p)[a_1(p)]$.

We shall show that~\eqref{eq66f481ba} implies that $F$ is a conformal submersion of factor $\lambda$ and thus the function $b$ in~\eqref{eq66f481ba} has the form given by Theorem~\ref{thm67082cc6}.

Let $U\subset H$ be an open neighborhood of $1_H$ such that the map $\log:U\to\frk h$, i.e., the inverse of the exponential map $\exp$ of $H$, is well defined.
Let $\phi\in C^\infty_c(H)$ be a bump function such that $1_H$ is in the interior of $\{\phi=1\}$ and the support of $\phi$ is contained in $U$.

We denote by $\frk h^*$ the dual space of $\frk h$ and by $\langle \cdot|\cdot \rangle:\frk h\times\frk h^*\to\R$ the standard pairing contraction.
For $\alpha\in\frk h^*$ and $\hat q\in H$,
define $u_{\alpha}, u_{\alpha}^{\hat q}\in C^\infty_c(H)$ by
\begin{equation}\label{eq66f51811}
	u_{\alpha}(q) := \phi(q) \cdot \langle \alpha | \log(q) \rangle^2 ,
	\qquad\text{ and }\qquad
	u_{\alpha}^{\hat q}(q) := u_{\alpha}(\hat q^{-1} q) .
\end{equation}
Notice that, if $X\in\frk h$ and $t\in\R$ are such that $\phi(\exp(tX))=1$, then
\[
u_{\alpha}(\exp(tX))
= t^2 \langle \alpha | X \rangle^2 .
\]
If $v,w\in V(H)$, then 
\begin{equation}\label{eq6773d391}
\begin{aligned}
Du_{\alpha}^{\hat q}(\hat q)[v] 
&= \left. \frac{\dd}{\dd t} \right|_{t=0} u_{\alpha}^{\hat q} (\hat q\exp(tv))
= \left. \frac{\dd}{\dd t} \right|_{t=0} t^2 \langle \alpha | v \rangle^2
= 0 , \quad\text{ and } \\
D^2u_{\alpha}^{\hat q}(\hat q)[v,v] 
&= \left. \frac{\dd^2}{\dd t^2} \right|_{t=0} u_{\alpha}^{\hat q} (\hat q\exp(tv))
= \left. \frac{\dd^2}{\dd t^2} \right|_{t=0} t^2 \langle \alpha | v \rangle^2
= 2 \langle \alpha | v \rangle^2 .
\end{aligned}
\end{equation}

Fix an orthonormal basis $Y_1,\dots,Y_s$ of $V_1(H)$.
From~\eqref{eq6773d391} and~\eqref{eq670d6c55}, we obtain 
\begin{equation}\label{eq6773d56f}
\grad_H u_{\alpha}^{\hat q}(\hat q) = 0 
\qquad\text{and}\qquad
\laplacian_H u_{\alpha}^{\hat q}(\hat q) = \sum_{i=1}^{s} 2 \langle \alpha | Y_i \rangle^2 .
\end{equation}

Fix an orthonormal basis $X_1,\dots,X_r$ of $V_1(G)$,
and consider a function $F:\Omega\to H$ and one point $\hat p\in\Omega$ with $F(\hat p) = \hat q$.  
The sub-Laplacian of $u_{\alpha}^{\hat q}\circ F$ at $\hat p$ is
\begin{equation}\label{eq6773d547}
\begin{aligned}
	\laplacian_G (u_{\alpha}^{\hat q}\circ F)(\hat p) 
	&\overset{\eqref{eq670d6c55}}= \sum_{i=1}^r D^2(u_{\alpha}^{\hat q}\circ F)(\hat p)[X_i,X_i] + D(u_{\alpha}^{\hat q}\circ F)(\hat p)[\grad_G\mu_G] \\
	&\hspace{-1em}\overset{\eqref{eq670d7a23}\&\ref{PropChainRule1_2}}{=} \sum_{i=1}^r 
	D^2u_{\alpha}^{\hat q}(F(\hat p)) [ DF(\hat p)X_i , DF(\hat p)X_i ] \\
	&\qquad+ Du_{\alpha}^{\hat q}(F(\hat p))[D^2F(\hat p)[X_i,X_i]] \\
	&\qquad\qquad+ Du_{\alpha}^{\hat q}( F(\hat p))[DF(\hat p)[\grad_G\mu_G] ]\\
	&\overset{\eqref{eq6773d391}}= \sum_{i=1}^r 2\langle \alpha | DF(\hat p)X_i \rangle .
\end{aligned}
\end{equation}

Now, we apply~\eqref{eq6773d56f} and~\eqref{eq6773d547} to~\eqref{eq66f481ba}, and we obtain
\begin{equation}\label{eq66f52952}
\sum_{i=1}^{r} \langle \alpha | DF(\hat p)X_i \rangle^2
=
\lambda(\hat p)^2 \sum_{i=1}^{s} \langle \alpha | Y_i \rangle^2 ,
\qquad\forall\alpha\in\frk h^* .
\end{equation}
First of all,
the identity~\eqref{eq66f52952} implies  that $DF(\hat p)[V(G)]\subset V(H)$.
Indeed, if $\alpha\in\frk h^*$ is such that $V(H)\subset\ker(\alpha)$, then~\eqref{eq66f52952} implies $\langle \alpha | DF(\hat p)X_i \rangle^2=0$ for all $i$ and thus $DF(\hat p)[V(G)]\subset\ker(\alpha)$.

We have now that the map $DF(\hat p)|_{V(G)}:V(G)\to V(H)$ is a linear map whose adjoint 
$(DF(\hat p)|_{V(G)})^*:V(H)^*\to V(G)^*$ is an homothetic embedding of factor $\lambda(\hat p)$, by~\eqref{eq66f52952}.
By Proposition~\ref{prop676ff612}, we conclude that $DF(\hat p)|_{V(G)}$ is an homothetic projection of factor $\lambda(\hat p)$.

Since $\hat p$ is an arbitrary point in $\Omega$, we conclude that $F$ is a conformal submersion of factor $\lambda$.
\end{proof}

\section{Consequences and further results}

\subsection{Equivalent sums of squares}
	
\begin{corollary}\label{cor676308cb}
	Let $G$ be a polarized Lie group.
	Let $X=(X_1,\dots,X_r)$ and $Y=(Y_1,\dots,Y_r)$ be two bases of $V(G)$.
	Define the differential operators
	\[
	P_X := \sum_{i=1}^r \tilde X_i^2 + \div_G(\tilde X_i) \tilde X_i
	\quad\text{ and }\quad
	P_Y := \sum_{i=1}^r \tilde Y_i^2 + \div_G(\tilde Y_i) \tilde Y_i .
	\]
	Then the following statements are equivalent:
	\begin{enumerate}[label=(\roman*)]
	\item\label{cor676308cb_1}
		$P_X=P_Y$;
	\item\label{cor676308cb_2}
		There exists a scalar product on $V(G)$ for which both $X$ and $Y$ are orthonormal bases;
	\item\label{cor676308cb_3}
		There exists an $r\times r$ invertible matrix $A$ with $A^{-1} = A^T$ such that $Y_i = \sum_j A_i^jX_j$ for all $i$.
	\end{enumerate}
\end{corollary}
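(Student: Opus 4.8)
The plan is to recognize $P_X$ and $P_Y$ as the sub-Laplacians attached to the two scalar products that render $X$ and $Y$ orthonormal, and then route the three implications through results already in place. Write $\langle\cdot,\cdot\rangle_X$ for the unique scalar product on $V(G)$ making $X$ orthonormal and $\laplacian_G^X$ for the corresponding sub-Laplacian, and likewise for $Y$. By formula~\eqref{eq670d6c55}, we have $P_X=\laplacian_G^X$ and $P_Y=\laplacian_G^Y$, since the right-hand side of~\eqref{eq670d6c55} is precisely $\sum_i(\tilde X_i^2+\div_G(\tilde X_i)\tilde X_i)$ when $X$ is orthonormal.

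The equivalence \ref{cor676308cb_2}$\IFF$\ref{cor676308cb_3} is pure linear algebra: if a single scalar product makes both bases orthonormal and $Y_i=\sum_j A_i^jX_j$, then $\delta_{ik}=\langle Y_i,Y_k\rangle=(AA^T)_{ik}$, so $A^T=A^{-1}$; conversely an orthogonal $A$ transports the scalar product making $X$ orthonormal to one that also makes $Y$ orthonormal. The implication \ref{cor676308cb_2}$\THEN$\ref{cor676308cb_1} follows because the sub-Laplacian formula~\eqref{eq670d6c55} does not depend on the chosen orthonormal basis (the orthogonal-change-of-basis computation used earlier to show that $\trace_G$ is well defined), so both $P_X$ and $P_Y$ equal the sub-Laplacian of the common scalar product.

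The substantial step is \ref{cor676308cb_1}$\THEN$\ref{cor676308cb_3}, which amounts to extracting the principal symbol from the operator identity $P_X=P_Y$. I would reuse the test functions $u_\alpha^{\hat q}$ from the proof of Theorem~\ref{thm670e8117}: for $\alpha\in\frk g^*$ and $\hat q\in G$ they satisfy $\grad_G u_\alpha^{\hat q}(\hat q)=0$ and $D^2u_\alpha^{\hat q}(\hat q)[v,v]=2\langle\alpha|v\rangle^2$ by~\eqref{eq6773d391}. Applying $P_X$ to $u_\alpha^{\hat q}$ at $\hat q$, the first-order terms vanish because $Du_\alpha^{\hat q}(\hat q)[X_i]=0$, leaving $P_Xu_\alpha^{\hat q}(\hat q)=2\sum_i\langle\alpha|X_i\rangle^2$; the same computation for $P_Y$ gives $2\sum_i\langle\alpha|Y_i\rangle^2$. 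Since $P_X=P_Y$ and $\hat q,\alpha$ are arbitrary,
\[
\sum_{i=1}^r\langle\alpha|X_i\rangle^2=\sum_{i=1}^r\langle\alpha|Y_i\rangle^2,\qquad\forall\,\alpha\in\frk g^*.
\]
Substituting $Y_i=\sum_jA_i^jX_j$ turns the right-hand side into $\sum_{j,k}(A^TA)_{jk}\langle\alpha|X_j\rangle\langle\alpha|X_k\rangle$, while the left-hand side is $\sum_{j,k}\delta_{jk}\langle\alpha|X_j\rangle\langle\alpha|X_k\rangle$. Because $X_1,\dots,X_r$ are linearly independent, the functionals $\alpha\mapsto\langle\alpha|X_j\rangle$ are linearly independent, so the quadratic monomials $\langle\alpha|X_j\rangle\langle\alpha|X_k\rangle$ (for $j\le k$) are linearly independent; matching coefficients forces $A^TA=\Id$, which is~\ref{cor676308cb_3}.

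The main obstacle is this symbol extraction, and in particular the claim that equality of the two quadratic forms on $\frk g^*$ pins the change-of-basis matrix down to be orthogonal; the linear independence of the quadratic monomials is the elementary fact that makes it work. A slicker but less self-contained alternative for \ref{cor676308cb_1}$\THEN$\ref{cor676308cb_2} is to apply Theorem~\ref{thm670e8117} to the identity map $\Id:(G,\langle\cdot,\cdot\rangle_X)\to(G,\langle\cdot,\cdot\rangle_Y)$ with $a_2=b_2=1$ and all other coefficients zero: the hypothesis $P_X=P_Y$ yields that $\Id$ is a conformal submersion of factor $\lambda$ with $\lambda^2=1$, and since $D\Id=\Id_{\frk g}$, Proposition~\ref{prop676ff612} forces $\Id|_{V(G)}$ to be an isometry between the two scalar products, so they coincide.
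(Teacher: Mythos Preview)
Your proof is correct. The ``slicker alternative'' you offer at the end---applying the main characterization result (Theorem~\ref{thm670e8117}, or equivalently Theorem~\ref{thm6763089e}) to the identity map $\Id:(G,\langle\cdot,\cdot\rangle_X)\to(G,\langle\cdot,\cdot\rangle_Y)$ to conclude it is an isometry---is exactly the paper's route for \ref{cor676308cb_1}$\THEN$\ref{cor676308cb_2}. Your primary argument instead extracts the principal symbol directly via the test functions $u_\alpha^{\hat q}$, redoing in miniature the key computation from the proof of Theorem~\ref{thm670e8117} rather than invoking the theorem as a black box. This buys you a self-contained proof of the corollary that does not depend on the full statement of Theorem~\ref{thm670e8117} (in particular, no need for the conformal-submersion formalism or Proposition~\ref{prop676ff612}), at the cost of reproducing the test-function step. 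The paper's version is shorter precisely because the corollary is positioned as an application of the main theorem; your direct argument would stand on its own even if Theorem~\ref{thm670e8117} were removed.
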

\begin{proof}
	It is clear that the conditions \ref{cor676308cb_2} and \ref{cor676308cb_3} are equivalent.
	
	$\ref{cor676308cb_2}\THEN\ref{cor676308cb_1}$
	If $X$ and $Y$ are orthonormal bases for the same (now fixed) scalar product, then $P_X$ and $P_Y$ are just the same sub-Laplacian $\laplacian_G$ for the sub-Riemannian Lie group $G$.
	
	$\ref{cor676308cb_1}\THEN\ref{cor676308cb_2}$
	Consider the two sub-Riemannian Lie groups $G_X$ and $G_Y$ that are equal to $G$ as polarized Lie group but with the scalar products defined by imposing $X$ or $Y$ orthonormal.
	Then $P_X = \laplacian_{G_X}$ and $P_Y=\laplacian_{G_Y}$.
	Theorem~\ref{thm6763089e} says that, if $P_X=P_Y$, then the identity map $G\to G$ is an isometry.
	Thus, $G_X=G_Y$.
\end{proof}

\begin{remark}
	Notice that, in Corollary~\ref{cor676308cb}, the identity $P_X=P_Y$ does not imply that there exists a transformation of $G$ that maps the basis $X$ to the basis $Y$.
	In particular, the linear transformation $A$ acts only on the space $V(G)$, but neither on $\frk g$, nor on $G$.
\end{remark}

\subsection{Carnot groups and proof of Theorem~\ref{thm670e8a84}}\label{sec67633de5}

A \emph{stratification} of a Lie algebra $\frk g$ is a splitting $\frk g = \bigoplus_{k=1}^sV_k$ such that $[V_1,V_k]=V_{k+1}$ for all $i\in\{1,\dots,s-1\}$ and $[V_1,V_s]=\{0\}$.
A \emph{Carnot group} is a connected simply connected sub-Riemannian Lie group $G$ whose polarization $V(G)\subset\frk g$ is the first layer of a stratification of $\frk g$, the Lie algebra of $G$.
See~\cite{MR3742567,donne2024metricliegroupscarnotcaratheodory} for details.

The important feature of Carnot groups is that they support dilations,
 that is, homotheties for all positive factors $\lambda>0$.
Indeed, for each $\lambda>0$, define $\delta_\lambda:G\to G$ as the Lie group automorphism such that $(\delta_\lambda)_*:\frk g\to \frk g$ is the Lie algebra automorphism given by $(\delta_\lambda)_*v=\lambda^kv$ for all $v\in V_k$.

Homotheties between Carnot groups of the same dimension are in fact isomorphisms, see \cite{MR3441517} and the proof of Proposition~\ref{prop6767ac89} below.
We don't know a corresponding characterization of conformal submersions, but we
 can characterize the pairs of Carnot groups between which there exist a conformal submersion:
the range group must be a quotient of the domain.

\begin{proposition}\label{prop6767ac89}
	Let $G$ and $H$ be sub-Riemannian Carnot groups, $\Omega\subset G$ open.
	There exists a conformal submersion $F:\Omega\to H$ of class $C^1$, if and only if there exists a Carnot morphism $F_0:G\to H$ that is a conformal submersion.
	In particular, $H$ is a quotient of $G$ by a dilation invariant normal subgroup.
	
	If $\dim(G)=\dim(H)$ and if $F$ is a conformal submersion of constant factor $\lambda>0$, then, up to precomposing with a left translation, $F$ itself is an isomorphism of Carnot groups.
\end{proposition}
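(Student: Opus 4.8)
The plan is to prove Proposition~\ref{prop6767ac89} in two stages, reflecting its two-part statement. For the first (existence) part, the natural strategy is to extract a \emph{group morphism} conformal submersion $F_0$ from an arbitrary $C^1$ conformal submersion $F$ by exploiting the dilation structure of Carnot groups. Concretely, I would fix a point $\hat p\in\Omega$ and consider the \emph{blow-up} (Pansu-type differential) of $F$ at $\hat p$: compose $F$ with dilations $\delta_{1/t}$ on the target, left translations that move $\hat p$ and $F(\hat p)$ to the identities, and dilations $\delta_t$ on the source, then let $t\to 0$. Because $DF(\hat p)|_{V(G)}$ is a homothetic projection of factor $\lambda(\hat p)$, the rescaled maps converge to a map $F_0$ whose horizontal differential is constant, equal to $\tfrac{1}{\lambda(\hat p)}$ times an isometric surjection $V(G)\to V(H)$. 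The key algebraic fact is that a contact map between Carnot groups whose horizontal Lie differential is constant must be (a translate of) a graded group homomorphism: the bracket-generating condition forces the full differential to be the homogeneous extension of the horizontal part, and a map with constant Lie differential equal to a Lie algebra morphism on $V(G)$ is a Carnot morphism. This $F_0$ is then a Carnot-morphism conformal submersion, and its kernel is a dilation-invariant (because $F_0$ is graded) normal subgroup $N\lhd G$ with $H\cong G/N$ as Carnot groups.

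For the second (equidimensional, constant factor) part, I would argue as follows. When $\dim(G)=\dim(H)$ and $\lambda$ is a positive constant, a conformal submersion of factor $\lambda$ has $\ker(DF(p)|_{V(G)})=\{0\}$ at every point (since a surjective homothetic projection between equidimensional polarizations is injective on $V(G)$), so $DF(p)|_{V(G)}$ is a linear \emph{isometry up to scale} $\lambda$. After precomposing with a suitable left translation (and, if desired, rescaling by $\delta_{1/\lambda}$) one reduces to the case $\lambda=1$, i.e.\ $F$ induces a pointwise \emph{isometry} on horizontal spaces. The plan is then to invoke the rigidity of such maps: a $C^1$ map between equidimensional Carnot groups whose horizontal differential is a pointwise isometry of the polarizations is an isometric graded isomorphism (this is the Carnot analogue of the classical fact that conformal maps with constant factor are homotheties, cited via \cite{MR3441517} for the homothety case). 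I would combine the constancy of the horizontal differential forced by the blow-up argument of the first part with the equidimensionality to conclude $F_0$ is a Lie group automorphism, and that $F$ itself, being a dilation times a translation times $F_0$, is an isomorphism of Carnot groups.

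The main obstacle I anticipate is making the blow-up/convergence argument rigorous under only $C^1$ regularity of $F$. The delicate point is that the rescaled maps $\delta_{1/t}\circ L_{F(\hat p)^{-1}}\circ F\circ L_{\hat p}\circ\delta_t$ must be shown to converge (locally uniformly, say in $C^1_{\mathrm{loc}}$) to a well-defined limit, and that the limit is a group homomorphism rather than merely a map with prescribed horizontal differential. Controlling the higher-layer behavior of $F$ from horizontal information alone requires the bracket-generating hypothesis together with the contact condition, and one must verify that the limit respects the grading. A cleaner alternative that sidesteps convergence issues is to work infinitesimally: show directly that the constancy of $DF(p)|_{V(G)}$ (up to the scalar $\lambda$), which follows because a homothetic projection between polarizations of Carnot groups of prescribed factor is essentially unique up to the orthogonal group, forces $DF$ to be constant on all of $\frk g$ via the contact relations $D^2F(p)[V(G),\frk g]\subseteq V(H)$ and the chain rule~\eqref{eq670d7a23}; a map with constant Lie differential equal to a fixed Lie algebra morphism is, after a translation, exactly a group morphism. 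I would pursue this infinitesimal route as the primary argument and use the geometric blow-up only as motivation.
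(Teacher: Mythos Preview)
Your blow-up approach for the first part is exactly what the paper does, and your anticipated obstacle is not one: the limit you describe is the \emph{Pansu differential} of $F$ at $\hat p$, and its existence for $C^1$ contact maps between Carnot groups is a standard result (the paper simply cites \cite{donne2024metricliegroupscarnotcaratheodory}). The Pansu differential is, by construction, a Carnot morphism $F_0:G\to H$ with $DF_0(1_G)|_{V(G)}=DF(\hat p)|_{V(G)}$, hence automatically a conformal submersion of factor $\lambda(\hat p)$. No separate convergence argument or bracket-chasing is needed. For the second part, your strategy is again the paper's: compose with $\delta^H_{1/\lambda}$ to obtain a local isometry, then invoke the rigidity result of \cite{MR3441517} to conclude that this isometry extends to an affine isomorphism of Carnot groups.

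Your proposed ``infinitesimal route'', however, has a genuine gap and you should abandon it. The claim that $DF(p)|_{V(G)}$ must be constant in $p$ (even up to the scalar $\lambda(p)$) is false: homothetic projections $V(G)\to V(H)$ of a given factor form an orbit of the orthogonal group, not a single map, and nothing in the hypotheses prevents $DF(p)|_{V(G)}$ from moving continuously within that orbit. Concretely, nontrivial conformal maps of the Heisenberg group (e.g.\ the Kor\'anyi inversion composed with translations) are conformal submersions whose horizontal differential varies from point to point, yet they are not group morphisms. So one cannot deduce constancy of $DF$ on $\frk g$ from the contact/chain-rule relations alone. The paper sidesteps this entirely: it never asserts that $F$ has constant differential, only that the Pansu differential \emph{at a single point} furnishes the desired Carnot morphism $F_0$; the stronger conclusion that $F$ itself is affine in the equidimensional constant-$\lambda$ case comes solely from the cited isometry rigidity, not from any constancy argument.
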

\begin{proof}
	Suppose we have a $C^1$ conformal submersion $F:\Omega\to H$ of factor $\lambda:\Omega\to(0,+\infty)$.
	Since $F$ is a contact map of class $C^1$, it is Pansu differentiable at every point,
	see for instance \cite{donne2024metricliegroupscarnotcaratheodory}.
	The Pansu differential of $F$ at a point $p$ is the Lie group morphism $F_0:G\to H$ such that 
	\[
	DF_0(1_G)[v] = DF(p)[v]
	\]
	for all $v\in V_1(G)$.
	In particular, $F_0$ is also a conformal submersion of factor $\lambda(p)$.
	
	If $\dim(G)=\dim(H)$ and if $F$ is a conformal submersion of constant factor $\lambda>0$, then $\delta^H_{1/\lambda}\circ F:\Omega\to H$ is an isometry, and thus it is the restriction of an isomorphism of Carnot groups by \cite{MR3441517}.
\end{proof}

The case when $G=H$, we have the following characterization of homotheties and isometries in terms of the sub-Laplacian.
\begin{proposition}\label{prop67682013}
	If $G$ is a Carnot group, $\Omega\subset G$ open and $\lambda>0$.
	Let $F:\Omega\to G$ be a map of class $C^2$.
	The following statements are equivalent:
	\begin{enumerate}[label=(\roman*)]
	\item\label{cor6763f920_1}
	For every $u\in C^2(\Omega)$, we have
	\[
	\laplacian_G(u\circ F) = \lambda^2 (\laplacian_Gu)\circ F + \langle \trace(D^2F), (\grad_Gu)\circ F \rangle.
	\]
	\item\label{cor6763f920_2}
	There are $p\in G$ and a Lie automorphism $A\in\Aut(G)$ that is an isometry such that $F=\delta_\lambda\circ A\circ L_p$.
	\end{enumerate}
	
	In both cases, we have in fact $\trace(D^2F)=0$.
\end{proposition}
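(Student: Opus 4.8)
The plan is to read condition~\ref{cor6763f920_1} as an instance of the master identity of Theorem~\ref{thm6763089e}, and to verify condition~\ref{cor6763f920_2} through the explicit computation of Theorem~\ref{thm67082cc6}. The decisive simplification is that a Carnot group is nilpotent, hence unimodular, so that $\mu_G\equiv1$ and $\grad_G\mu_G\equiv0$. I will also repeatedly use that a map of the form $B\circ L_p$, with $B$ a Lie group morphism, has constant Lie differential $DF\equiv B_*$ by Proposition~\ref{PropChainRule1_3}; its second Lie differential therefore vanishes identically, and this is exactly the source of the final assertion $\trace(D^2F)=0$.

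For $\ref{cor6763f920_2}\THEN\ref{cor6763f920_1}$ I would start from $F=\delta_\lambda\circ A\circ L_p$. Since $\delta_\lambda\circ A$ is a Lie group automorphism, the differential $DF\equiv(\delta_\lambda\circ A)_*$ is constant, so $D^2F\equiv0$ and in particular $\trace(D^2F)=0$. On $V(G)=V_1$ the map $(\delta_\lambda)_*$ is multiplication by $\lambda$ while $A_*$ is orthogonal, so $(\delta_\lambda\circ A)_*|_{V_1}$ is a homothety of factor $\lambda$ of $V_1$ onto itself; hence $F$ is a conformal submersion of constant factor $\lambda$. Theorem~\ref{thm67082cc6} then applies, and the vector $b$ appearing there equals $\trace_G(D^2F)+DF[\grad_G\mu_G]-\lambda^2(\grad_G\mu_G)\circ F=0$, the first summand vanishing because $D^2F\equiv0$ and the other two by unimodularity. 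This gives $\laplacian_G(u\circ F)=\lambda^2(\laplacian_Gu)\circ F$, which is precisely~\ref{cor6763f920_1} once we note that the term $\langle\trace(D^2F),(\grad_Gu)\circ F\rangle$ is zero.

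For the converse $\ref{cor6763f920_1}\THEN\ref{cor6763f920_2}$ I would observe that~\ref{cor6763f920_1} is the identity of statement~\ref{thm6763089e_1} in Theorem~\ref{thm6763089e}, with the constant factor $\lambda$, with $c\equiv0$, and with the continuous vector field $b=\trace(D^2F)$ (continuous since $F$ is $C^2$). Theorem~\ref{thm6763089e} then forces $F$ to be a conformal submersion of constant factor $\lambda$. Because here the target equals $G$, we are in the equal-dimension, constant-factor case of Proposition~\ref{prop6767ac89}, which shows that $\delta_{1/\lambda}\circ F$ is the restriction of an isometry of the Carnot group $G$. By the classification of isometries of Carnot groups from~\cite{MR3441517}, such an isometry has the form $L_q\circ A$ with $A\in\Aut(G)$ an isometric automorphism and $q\in G$, whence $F=\delta_\lambda\circ L_q\circ A$.

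The only genuinely fiddly step is the normalization into the stated form, and it is elementary: since $L_q\circ A=A\circ L_{A^{-1}(q)}$, we get $F=\delta_\lambda\circ A\circ L_p$ with $p:=A^{-1}(q)$, which is~\ref{cor6763f920_2} (no commutation of $\delta_\lambda$ with $A$ is needed). Having reached the normal form in either direction, the constant-differential argument of the second paragraph gives $D^2F\equiv0$, so $\trace(D^2F)=0$ in both cases. I expect the main obstacle to be purely a matter of correctly importing the external input---the description of Carnot isometries behind Proposition~\ref{prop6767ac89} and~\cite{MR3441517}---since everything else reduces to the chain rules of Proposition~\ref{PropChainRule1} and the sub-Laplacian formula~\eqref{eq670d6c55}.
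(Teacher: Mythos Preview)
Your proposal is correct and follows essentially the same route as the paper: unimodularity collapses the vector $b$ of Theorem~\ref{thm6763089e} to $\trace(D^2F)$, the theorem then yields that $F$ is a constant-factor conformal submersion, and the classification of Carnot isometries from~\cite{MR3441517,MR3646026} gives the normal form, with $D^2F\equiv0$ following from Proposition~\ref{PropChainRule1_3}. The only cosmetic difference is that the paper writes the isometry directly as $A\circ L_p$, whereas you obtain $L_q\circ A$ and then normalize via $L_q\circ A=A\circ L_{A^{-1}(q)}$; both are equivalent.
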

\begin{proof}
	Since $G$ is unimodular, the vector field $b$ defined in Theorem~\ref{thm6763089e} is $b=\trace(D^2F)$.
	
	$\ref{cor6763f920_1}\THEN\ref{cor6763f920_2}$.
	From Theorem~\ref{thm6763089e} we obtain that $F$ is a homothety of factor $\lambda$.
	In other words, $\delta_\lambda^{-1}\circ F$ is an isometry.
	Since isometries of Carnot groups are of the form $A\circ L_p$ for some isometry $A\in\Aut(G)$ and $p\in G$ (see~\cite{MR3441517,MR3646026}) then~\ref{cor6763f920_2} follows.
	
	$\ref{cor6763f920_2}\THEN\ref{cor6763f920_1}$.
	Functions of the form described in \ref{cor6763f920_2} are clearly conformal submersions of factor $\lambda$.
	From Theorem~\ref{thm6763089e} then follows~\ref{cor6763f920_1}.
	Since both $\delta_\lambda$ and $A$ are automorphisms, then $D^2F=0$.
	We conclude that the vector field $b$ defined in Theorem~\ref{thm6763089e} is zero.
\end{proof}

\subsection{Symplectic preparation to the Heisenberg group}\label{subs6768707b}

\newcommand{\GL}{\mathrm{GL}}

The following discussion comes from~\cite[Section 2.4]{MR3674984}.

Let $V$ be a vector space of dimension $2n$.
A \emph{symplectic form on $V$} is a bilinear map $\omega:V\times V\to\R$ that is alternating and non-singular.
For a symplectic form $\omega$ on $V$ and a scalar product $g$ on $V$, 
define the operator $A_{\omega,g}\in\GL(V)$ by
\[
\omega(v,w) = g(v,A_{\omega,g}w)
\qquad
\forall v,w\in V.
\]

The \emph{symplectic spectrum of $g$ with respect to $\omega$} is the $n$-tuple
\[
\vec r_\omega(g) = (r_1,\dots,r_n)\in (0,+\infty)^n
\]
such that $r_1\le r_2\le\dots\le r_n$ and such that $-r_1^4, \dots, -r_n^4$ are the eigenvalues of the operator $A_{\omega,g}^2$.

The symplectic spectrum is well defined, because $A_{\omega,g}^2$ has at most $n$ distinct eigenvalues and they are negative.
Indeed, notice that $A_{\omega,g}$ is $g$-antisymmetric, that is,
$g(v,A_{\omega,g}w) = - g(A_{\omega,g}v,w) $.
It follows that the eigenvalues of $A_{\omega,g}$ are purely imaginary and thus
 $A_{\omega,g}^2$ is $g$-symmetric, that is, 
$g(A_{\omega,g}^2v,w) = g(v,A_{\omega,g}^2w)$,
and it has negative eigenvalues.

It is a standard fact that any pair of symplectic forms $\omega_1,\omega_2$ on $V$ are equivalent, that is, there exists $\Psi\in\GL(V)$ such that $\omega_2=\Psi^*\omega_1$,
see for instance \cite[Exercise 2.1.15]{MR3674984}.
The next lemma gives a complete description of equivalent classes of pairs $(\omega,g)$ consisting of a symplectic form and scalar product.

\begin{lemma}[{\cite[Lemma 2.4.6]{MR3674984}}]\label{lem6768583e}
	Let $V$ be a vector space, $\omega_1,\omega_2$ symplectic forms on $V$ and $g_1,g_2$ scalar products on $V$.
	The following statements are equivalent:
	\begin{enumerate}[label=(\roman*)]
	\item\label{lem6768583e_1}
	there exists a linear isomorphism $\Psi\in\GL(V)$ such that $\omega_2=\Psi^*\omega_1$ and $g_2=\Psi^*g_1$;
	\item\label{lem6768583e_2}
	$\vec r_{\omega_1}(g_1) = \vec r_{\omega_2}(g_2)$.
	\end{enumerate}
\end{lemma}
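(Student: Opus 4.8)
The plan is to reduce both implications to the behaviour of the operator $A_{\omega,g}$ under pullback, and then, for the harder direction, to a normal form for the pair $(\omega,g)$ determined only by the symplectic spectrum.

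First I would record how $A_{\omega,g}$ transforms. Suppose $\Psi\in\GL(V)$ satisfies $\omega_2=\Psi^*\omega_1$ and $g_2=\Psi^*g_1$, i.e.\ $\omega_2(v,w)=\omega_1(\Psi v,\Psi w)$ and $g_2(v,w)=g_1(\Psi v,\Psi w)$ for all $v,w$. Writing $A_i:=A_{\omega_i,g_i}$ and using the defining identity $\omega_i(v,w)=g_i(v,A_iw)$, I would compute $g_1(\Psi v,A_1\Psi w)=\omega_1(\Psi v,\Psi w)=\omega_2(v,w)=g_2(v,A_2w)=g_1(\Psi v,\Psi A_2w)$. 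Since $\Psi$ is invertible and $g_1$ is non-degenerate, this forces $A_1\Psi=\Psi A_2$, hence $A_2=\Psi^{-1}A_1\Psi$ and $A_2^2=\Psi^{-1}A_1^2\Psi$. Conjugate operators have the same eigenvalues, so $A_1^2$ and $A_2^2$ share their spectra; by the definition of the symplectic spectrum this gives $\vec r_{\omega_1}(g_1)=\vec r_{\omega_2}(g_2)$, proving \ref{lem6768583e_1}$\THEN$\ref{lem6768583e_2}.

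For the converse I would construct a normal form and show every pair is $\GL(V)$-equivalent to it. Fix $(\omega,g)$ and set $A:=A_{\omega,g}$. As already observed in the text, $A$ is $g$-antisymmetric and $A^2$ is $g$-symmetric with negative eigenvalues $-r_1^4,\dots,-r_n^4$; the spectral theorem for $g$-symmetric operators decomposes $V$ into a $g$-orthogonal sum of eigenspaces of $A^2$. On the eigenspace $E$ where $A^2=-r^4\,\Id$, the operator $J:=r^{-2}A$ satisfies $J^2=-\Id$ and is $g$-antisymmetric, hence a $g$-orthogonal complex structure with $g(Jv,Jw)=g(v,w)$. Choosing a $g$-orthonormal family $e_1,\dots,e_m$ with $\{e_k,Je_k\}$ spanning $E$, I would obtain a $g$-orthonormal basis of $E$ on which $\omega(e_k,Je_k)=g(e_k,AJe_k)=r^2 g(e_k,J^2e_k)=-r^2$ and all other pairings vanish. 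Assembling these bases over all eigenspaces yields a $g$-orthonormal basis of $V$ in which $g$ is the standard Euclidean product and $\omega$ is block-diagonal with blocks $\left(\begin{smallmatrix}0&-r_j^2\\ r_j^2&0\end{smallmatrix}\right)$, depending only on the multiset $\{r_j\}=\vec r_\omega(g)$. The conclusion then follows by gluing: if $\vec r_{\omega_1}(g_1)=\vec r_{\omega_2}(g_2)$, let $\Phi_i\in\GL(V)$ send the standard basis to the adapted basis for $(\omega_i,g_i)$, so that $\Phi_i^*\omega_i=\omega_{\mathrm{std}}$ and $\Phi_i^*g_i=g_{\mathrm{std}}$ with the \emph{same} standard forms on both sides; then $\Psi:=\Phi_1\circ\Phi_2^{-1}$ satisfies $\Psi^*\omega_1=(\Phi_2^{-1})^*\Phi_1^*\omega_1=(\Phi_2^{-1})^*\omega_{\mathrm{std}}=\omega_2$ and likewise $\Psi^*g_1=g_2$, giving \ref{lem6768583e_2}$\THEN$\ref{lem6768583e_1}.

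I expect the main obstacle to be the construction of the adapted basis, i.e.\ the spectral decomposition of the antisymmetric operator $A$ — in particular handling eigenspaces of $A^2$ of multiplicity greater than one, where one must produce the complex structure $J$ and a $J$-compatible $g$-orthonormal basis rather than simply diagonalize. This is precisely the finite-dimensional spectral theory of skew-symmetric operators (equivalently, the existence of a $g$-compatible complex structure on each $A^2$-eigenspace), and once it is in place both implications reduce to the short linear-algebra computations above.
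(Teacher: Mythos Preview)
Your proposal is correct and follows essentially the same approach as the paper. For \ref{lem6768583e_1}$\THEN$\ref{lem6768583e_2} your conjugacy computation $A_2=\Psi^{-1}A_1\Psi$ is identical to the paper's; for \ref{lem6768583e_2}$\THEN$\ref{lem6768583e_1} the paper simply asserts (from $g$-skew-symmetry of $A$) the existence of a $g$-orthonormal basis $(X_1,\dots,X_n,Y_1,\dots,Y_n)$ with $AX_i=Y_i$ and $AY_i=-r_i^2X_i$, whereas you supply the construction explicitly via the complex structure $J=r^{-2}A$ on each $A^2$-eigenspace --- but the underlying idea, a normal form for $(\omega,g)$ determined solely by the symplectic spectrum, is the same.
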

\begin{proof}
	$\ref{lem6768583e_1}\THEN\ref{lem6768583e_2}$:
	Set $A_j:= A_{\omega_j,g_j}$.
	Then, for all $v,w\in V$,
	\begin{align*}
	g_1(\Psi v,\Psi A_2w)
	&\overset{g_2=\Psi^*g_1}= g_2(v,A_2w)
	= \omega_2(v,w) \\
	&\overset{\omega_2=\Psi^*\omega_1}= \omega_1(\Psi v,\Psi w)
	= g_1(\Psi v,A_1\Psi w) .
	\end{align*}
	It follows that $\Psi A_2 w= A_1\Psi w$ for all $w\in V$, i.e., $A_2 = \Psi^{-1} A_1\Psi$.
	Therefore, $A_1^2$ and $A_2^2$ have the same spectrum and thus $\vec r_{\omega_1}(g_1) = \vec r_{\omega_2}(g_2)$.
	
	$\ref{lem6768583e_2}\THEN\ref{lem6768583e_1}$:
	The statement follows from the following general observation.
	If $\omega$ is a symplectic form, $g$ a scalar product and $A=A_{\omega,g}$,
	then, since $A$ is $g$-skew-symmetric, there exists a basis $\scr B = (X_1,\dots,X_n,Y_1,\dots,Y_n)$ of $V$ that is orthonormal for $g$ such that  
	$AX_i = Y_i$
	and $AY_i = -r_i^2 X_i$.
	On this basis, $\omega$ is determined by the relations $\omega(v,w) = g(v,Aw)$ for $v,w\in\scr B$.
	Therefore, in the hypothesis of \ref{lem6768583e_2}, if we take this type of bases for both pairs $(\omega_j,g_j)$, then map that exchanges the bases is the required $\Psi$.
\end{proof}

\subsection{Example: Heisenberg groups}\label{subs676e6b89}
For a vector space $V$ of dimension $2n$ and a symplectic form $\omega$ on $V$, define $\heis(\omega)$ as the vector space $V\times\R$ with Lie brackets
\[
[ (v,s), (w,t) ] = (0,\omega(v,w)) .
\]
The Lie algebra $\heis(\omega)$ is a step-two stratified Lie algebra with layers $V_1=V$ and $V_2=\R$.

We denote by $\Heis(\omega)$ the corresponding connected and simply connected Lie group.
Using nilpotency and the BCH formula, we describe $\Heis(\omega)$ as the manifold $V\times\R$ with group operation
\[
(v,s)*(w,t) = \left( v+w, s+t+\frac12\omega(v,w) \right) .
\]

A linear map $\hat\Psi:\heis(\omega_2)\to\heis(\omega_1)$ is an isomorphism of stratified Lie algebras if and only if there are $\Psi\in\GL(V)$ and $\zeta_\Psi\in\R\setminus\{0\}$
such that $\hat\Psi(v,t) = (\Psi v, \zeta_\Psi t)$, and
$\Psi^*\omega_1 = \zeta_\Psi\omega_2$.
Since symplectic forms are equivalent, every two $\heis(\omega_2)$ and $\heis(\omega_1)$ of same dimension are isomorphic as stratified Lie algebras for every pair of symplectic forms $\omega_1$ and $\omega_2$.

Given a symplectic form $\omega$ and a scalar product $g$ on $V$, we define the \emph{Heisenberg group}
$\Heis(\omega,g)$ as the Carnot group with Lie algebra $\heis(\omega)$, polarization $V$ and scalar product $g$ on $V$.

\begin{theorem}\label{thm6768768f}
	Two Heisenberg groups $\Heis(\omega_1,g_1)$ and $\Heis(\omega_2,g_2)$ are isometric if and only if there exists $\rho>0$ such that
	\begin{equation}\label{eq67687127}
	\vec r_{\omega_1}(g_1) = \rho \vec r_{\omega_2}(g_2) .
	\end{equation}
	In particular, there exist $F:\Heis(\omega_2,g_2)\to \Heis(\omega_1,g_1)$ and $\lambda>0$ such that 
	\[
	\laplacian_{\Heis(\omega_2,g_2)}(u\circ F) = \lambda^2 (\laplacian_{\Heis(\omega_1,g_1)} u)\circ F,
	\qquad\forall u\in C^2(\Heis(\omega_1,g_1)),
	\]
	if and only if~\eqref{eq67687127} holds.
\end{theorem}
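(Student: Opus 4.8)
The plan is to prove Theorem~\ref{thm6768768f} in two stages: first the isometry characterization via the symplectic spectrum, and then deduce the statement about sub-Laplacian commuting maps as a consequence of the earlier machinery.

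\textbf{Step 1: Isometry $\iff$ spectral condition.} For the direction showing that the spectral condition~\eqref{eq67687127} implies isometry, I would combine Lemma~\ref{lem6768583e} with the Lie-algebra isomorphism description given just above the theorem. Given $\rho>0$ with $\vec r_{\omega_1}(g_1) = \rho\,\vec r_{\omega_2}(g_2)$, I would first absorb the factor $\rho$ using a dilation: precomposing with a suitable dilation $\delta_c$ rescales the symplectic spectrum in a controlled way (since dilations act as scalars on each layer, they change $\omega$ and $g$ by definite powers of $c$), so that after rescaling I may assume $\rho=1$, i.e., $\vec r_{\omega_1}(g_1) = \vec r_{\omega_2}(g_2)$. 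By Lemma~\ref{lem6768583e}, there is then $\Psi\in\GL(V)$ with $\omega_2=\Psi^*\omega_1$ and $g_2=\Psi^*g_1$. The condition $\omega_2=\Psi^*\omega_1$ (with trivial stretching factor $\zeta_\Psi=1$) is exactly what is needed for the map $\hat\Psi(v,t)=(\Psi v,t)$ to be a stratified Lie algebra isomorphism $\heis(\omega_2)\to\heis(\omega_1)$, and $g_2=\Psi^*g_1$ makes it an isometry of the horizontal layers. Exponentiating $\hat\Psi$ to a Carnot group isomorphism $\Heis(\omega_2,g_2)\to\Heis(\omega_1,g_1)$ yields the desired isometry. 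For the converse, I would use that any isometry between Carnot groups is, up to a left translation, a Carnot group isomorphism (this is the structural fact cited in Proposition~\ref{prop67682013} from \cite{MR3441517,MR3646026}); such an isomorphism is induced by a stratified Lie algebra isomorphism, which by the description above has the form $(v,t)\mapsto(\Psi v,\zeta_\Psi t)$ with $\Psi^*\omega_1=\zeta_\Psi\omega_2$ and which must be a horizontal isometry, i.e. $g_2=\Psi^*g_1$. Feeding these two relations into the conjugation computation $A_2=\Psi^{-1}A_1\Psi$ (exactly as in the proof of Lemma~\ref{lem6768583e}, but now tracking the scalar $\zeta_\Psi$) shows that $A_2^2$ and $A_1^2$ have proportional spectra, giving~\eqref{eq67687127} with $\rho$ determined by $\zeta_\Psi$.

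\textbf{Step 2: Sub-Laplacian statement.} For the second assertion, the existence of $F$ and $\lambda$ with $\laplacian_{\Heis(\omega_2,g_2)}(u\circ F)=\lambda^2(\laplacian_{\Heis(\omega_1,g_1)}u)\circ F$ is precisely hypothesis~\eqref{eq676b2756} of Theorem~\ref{thm670e8a84}. Since the two Heisenberg groups have the same dimension $2n+1$, Theorem~\ref{thm670e8a84} immediately gives that $F$ is the composition of a dilation, a left translation, and an isometric Carnot isomorphism; in particular the two groups are isometric (after rescaling by the dilation factor). Conversely, if the groups are isometric, an isometry is a conformal submersion of factor $1$, hence by Theorem~\ref{thm67082cc6} (and unimodularity of Carnot groups, which kills the vector field $b$ as in Proposition~\ref{prop67682013}) it commutes with the sub-Laplacians with $\lambda=1$; composing with a dilation produces the general $\lambda>0$. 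Thus the existence of such an $(F,\lambda)$ is equivalent to the groups being isometric, which by Step~1 is equivalent to~\eqref{eq67687127}.

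\textbf{Main obstacle.} The routine parts are the two linear-algebra conjugation computations; the genuinely delicate point is the careful bookkeeping of the scalars. I expect the main obstacle to be correctly tracking how the dilation factor and the stretching factor $\zeta_\Psi$ interact with the fourth-power normalization built into the symplectic spectrum (the $-r_i^4$ convention). A dilation $\delta_c$ scales $\omega$ and $g$ by different powers, so one must verify that its net effect on $A_{\omega,g}^2$ is a genuine scalar multiple and compute the exact proportionality constant $\rho$ in terms of $c$ and $\zeta_\Psi$; getting this exponent accounting wrong would make the $\rho$ in~\eqref{eq67687127} come out with the wrong power. Everything else follows mechanically from Lemma~\ref{lem6768583e}, the stratified isomorphism description, and Theorem~\ref{thm670e8a84}.
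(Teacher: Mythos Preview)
Your proposal is correct and follows essentially the same route as the paper. The only cosmetic difference is in the ``spectral $\Rightarrow$ isometric'' direction: the paper absorbs the factor $\rho$ by rescaling the symplectic form (observing that $A_{\zeta\omega,g}=\zeta A_{\omega,g}$, hence $\vec r_{\zeta\omega}(g)=\sqrt{|\zeta|}\,\vec r_\omega(g)$, so one may apply Lemma~\ref{lem6768583e} directly to $(\omega_1,g_1)$ and $(\rho^2\omega_2,g_2)$ and read off $\hat\Psi(v,t)=(\Psi v,\rho^2 t)$), whereas you absorb $\rho$ via a dilation, which amounts to rescaling $g$ instead. Both give the same isometry; the paper's version avoids the exponent bookkeeping you flag as the ``main obstacle'' (note that on $V$ a dilation pulls back $\omega$ and $g$ by the \emph{same} factor $c^2$, so the relevant rescaling is really of $g$ alone while keeping $\omega$ fixed). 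Step~2 is identical to the paper's, invoking Theorem~\ref{thm670e8a84}.
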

\begin{proof}
	Suppose that $\Heis(\omega_1,g_1)$ and $\Heis(\omega_2,g_2)$ are isometric.
	Recall from \cite{MR3646026} that isometries of nilpotent Lie groups are affine.
	Thus, there is $\Psi\in\GL(V)$ such that $\Psi^*g_1=g_2$, $\Psi^*\omega_1 = \zeta_\Psi\omega_2$ for some $\zeta_\Psi\in\R\setminus\{0\}$.
	By Lemma~\ref{lem6768583e}, we have
	\[
	\vec r_{\omega_1}(g_1) = \vec r_{\zeta_\Psi\omega_2}(g_2) .
	\]
	It is an easy observation that, following the notation from Section~\ref{subs6768707b},
	$A_{\zeta_\Psi \omega_2,g_2} = \zeta_\Psi A_{\omega_2,g_2}$.
	It follows that
	\[
	\vec r_{\zeta_\Psi\omega_2}(g_2) = \sqrt{|\zeta_\Psi|} \vec r_{\omega_2}(g_2) ,
	\]
	which shows~\eqref{eq67687127} with $\rho=\sqrt{|\zeta_\Psi|}>0$.
	
	Suppose now that~\eqref{eq67687127} holds.
	We apply Lemma~\ref{lem6768583e} again to obtain $\Psi\in\GL(V)$ such that 
	$\Psi^*g_1=g_2$ and $\Psi^*\omega_1 = \rho^2\omega_2$.
	Then the map $\hat\Psi(v,t) = (\Psi v,\rho^2 t)$ is a Lie algebra automorphism $\heis(\omega_2)\to\heis(\omega_1)$ which defines a Lie group automorphism $\Heis(\omega_2,g_2)\to\Heis(\omega_1,g_1)$ that is an isometry.
	
	The last part of the statement follows directly from Theorem~\ref{thm670e8a84}.
\end{proof}

\subsection{Example: Heisenberg groups in coordinates}
It might be interesting to have a representation in coordinates of the Heisenberg groups and their sub-Laplacians.
Consider a basis $X_1,\dots,X_n,Y_1,\dots,Y_n$ of a vector space $V$ and define the symplectic form $\omega$ by setting $\omega(X_i,Y_i)=1=-\omega(Y_i,X_i)$ for all $i$, and zero otherwise.
For $\bar r \in(0,+\infty)^n$ with
$0<r_1\le r_2\le\dots\le r_n<\infty$, define the scalar product $g_{\bar r}$ such that 
$X_1,\dots,X_n,Y_1,\dots,Y_n$ is an orthogonal basis with
\[
g_{\bar r}(X_i,X_i) = g_{\bar r}(Y_i,Y_i) = \frac{1}{r_i^2} .
\]
Then, one can easily check that $\vec r_\omega(g_{\bar r}) = \bar r$.
By Theorem~\ref{thm6768768f}, if we assume $r_1=1$, then different vectors $\bar r$ give non-isometric Carnot structures.

Using the BCH formula, we can reconstruct the Lie group $\Heis(\omega)$ as $\R^{2n+1}$ with group operation
\[
(x,y,z) * (\bar x,\bar y,\bar z)
= \left(
x+\bar x, y+\bar y, z+\bar z + \frac12 \left(\sum_{i=1}^n (x_i\bar y_i-\bar x_iy_i \right) 
\right) ,
\]
where $x,y,\bar x,\bar y\in\R^n$ and $z,\bar z\in\R$.

The left invariant vector fields are
\begin{align*}
	\tilde X_i &= \frac{\de}{\de x_i} - \frac{y_i}{2} \frac{\de}{\de z} , &
	\tilde Y_i &= \frac{\de}{\de y_i} + \frac{x_i}{2} \frac{\de}{\de z} , &
	\tilde Z &= \frac{\de}{\de z} .
\end{align*}

An orthonormal basis of $V$ for $g_{\bar r}$ is
\begin{align*}
	\Xi_i &= r_iX_i \qquad\text{ if }i\in\{1,\dots,n\} , \\
	\Xi_{n+i} &= r_iY_i \qquad\text{ if }i\in\{1,\dots,n\} .
\end{align*}
We can express the sub-Laplacian $\laplacian_{\bar r}$ of $\Heis(\omega,g_{\bar r})$ with formula~\eqref{eq670d6c55} as
\begin{align*}
	\laplacian_{\bar r} = \sum_{i=1}^{2n} \tilde \Xi_i^2
	&= \sum_{i=1}^n r_i^2 (\tilde X_i^2 + \tilde Y_i^2) \\
	&= \sum_{i=1}^n r_i^2 \left( \frac{\de^2}{\de x_i^2} + \frac{\de^2}{\de y_i^2} + \frac{x_i^2+y_i^y}{4} \frac{\de^2}{\de z^2} + \left( x_i\frac{\de}{\de y_i} - y_i \frac{\de}{\de x_i} \right) \right).
\end{align*}
In the latter expression, different choices of $\bar r$ with $r_1=1$, give non-equivalent sub-Laplacians on $\Heis(\omega)\simeq\R^{2n+1}$.
We have thus a simple explanation of the phenomena described in \cite[\S16.3.1]{MR2363343}.

\printbibliography
\end{document}